\theoremstyle{plain}
\newtheorem{theorem}{Theorem}[section]
\newtheorem{proposition}[theorem]{Proposition}
\newtheorem{lemma}[theorem]{Lemma}
\newtheorem{corollary}[theorem]{Corollary}
\theoremstyle{definition}
\newenvironment{thA}{\begin{trivlist}\item[]{\bf Theorem  A\ }\begin{em}}
{\end{em}\end{trivlist}}
\newenvironment{thB}{\begin{trivlist}\item[]{\bf Theorem  B\ }\begin{em}}
{\end{em}\end{trivlist}}
\newcommand{\R}{\mathbb{R}}
\newcommand{\bc}{\mathbb{C}}
\newcommand{\ra}{{\rightarrow}}
\let\cal\mathcal
\def\tc{\overline{\mathcal{T}(S)}}
\def\g{\gamma}
\def\s{\sigma}
\def\a{\alpha}
\def\b{\beta}
\def\e{\epsilon}
\def\teich{\mathcal{T}(S)}
\def\H{\mathbb{H}}
\def\ray{\cal{R}}
\begin{document}
\title
        {Asymptotic behavior of grafting rays }
        \author{Raquel D\'{\i}az and Inkang Kim}
        \date{}
        \maketitle

\begin{abstract}
In this paper we study the convergence behavior of grafting rays to
the Thurston boundary of Teichm\"uller space. When the grafting is
done along a weighted system of simple closed curves  or along a
maximal uniquely ergodic lamination this behavior is the same as for
Teichm\"uller geodesics and lines of minima. We also show that the
ray grafted along a weighted system of simple closed curves is at
bounded distance from Teichm\"uller geodesic.
\end{abstract}
\footnotetext[1]{2000 {\sl{Mathematics Subject Classification.}}
51M10, 57S25.} \footnotetext[2]{{\sl{Key words and phrases.}}
Projective structure, Hyperbolic structure, Grafting,
Teichm\"uller space.} \footnotetext[2]{The first author
gratefully acknowledges the partial support of MCyT, DGI
 (BFM2003-03971) and hospitality of Seoul National
 University.} \footnotetext[3]{The second author
gratefully acknowledges the partial support of KRF grant
(KRF-2006-312-C00044) and the warm support of Universidad
Complutense at Madrid-Grupo Santander.}

\section{Introduction}

A complex projective structure on a surface is a
maximal atlas with charts modelled on $\bc P^1$ whose
transition maps are restrictions of automorphisms of
$\bc P^1$. The space $P(S)$ of projective structures
 on an oriented surface $S$ can be parametrized by the bundle
of  quadratic differentials over the Teichm\"uller space $\teich$,
through the Schwarzian derivative of the developing map of the
projective structure. Another more geometric parametrization of
$P(S)$ was given by Thurston using grafting. Indeed $P(S)$ is
homeomorphic to $\teich \times \cal{ML}(S)$ where $\cal{ML}(S)$ is a
measured lamination space \cite{KT}.
His grafting technique bridges together Teichm\"uller theory,
projective structure and Kleinian group theory  in geometric point
of view. This connection appears between the boundary of the
convex core of a hyperbolic 3-manifold which records a hyperbolic
metric $\sigma$ together with a bending measured lamination
$\lambda$, and its corresponding ideal boundary admitting a
projective structure which is the result of the grafting of
$\sigma$ along $\lambda$. After this technique was introduced,
there has been a tremendous study over this subject. See \cite{Du,
Ka, mcm, Ta} for example for excellent expositions. The difficult
part is to know the hyperbolic metric underlying the projective
structure obtained by grafting. In this paper, we estimate the
hyperbolic lengths of simple closed curves in a grafted structure
and draw some useful information.

On the other hand, Teichm\"uller space has been studied more than
several decades in relation to Teichm\"uller geodesics, dynamics and
related combinatorial structures. The complication lies often in the
thin part of it. Analytic tools like quadratic differentials,
Beltrami differentials have been studied by Gardiner, Strebel and
many others \cite{Ga,St}. Also the relation between quadratic
differentials and measured laminations have been studied by
Gardiner, Masur and Kerckhoff \cite{GM,K}. More recently
Techm\"uller space has been studied by using harmonic maps by
Minsky, Scannell and Wolf \cite{Min1,Wolf}.

Teichm\"uller space $\teich$ has some common aspects with
negatively curves spaces. 
Nevertheless, Masur \cite{Masur2} proved that Teichm\"uller space
does not have negative curvature. His method consists on showing the
existence of two geodesic rays starting at the same point and
remaining at bounded distance from each other. Also by studying the
limit points of some Teichm\"uller geodesics, Masur proved in
\cite{Masur} that the Teichm\"uller  and Thurston boundaries of
Teichm\"uller space are almost  the same. Indeed Kerckhoff had
proved that they are different
 \cite{K}.
 There are also
examples of Teichm\"uller geodesics which do not converge in
Thurston compactification \cite{L}.

There is an another object in Teichm\"uller space, lines of minima,
which were introduced by Kerckhoff \cite{K1}, with a more
hyperbolic-geometric meaning (along these lines some function in
terms of  hyperbolic lengths is minimized, while along Teichm\"uller
geodesics, some function in terms of extremal lengths is minimized).
The convergence properties of these lines towards Thurston boundary
has been studied by D\'{\i}az-Series \cite{DS}, and their similar
asymptotic behavior with Teichm\"uller geodesics by Choi-Rafi-Series
\cite{crs}.

In this paper, we study the third object, grafting rays and their
convergence properties, and prove  analogous results as in
\cite{Masur} for Teichm\"uller geodesics and in \cite{DS} for
lines of minima.
More precisely we prove that
\begin{thA} {\rm (Theorems  \ref{con} and
\ref{limit})}
 For a fixed hyperbolic surface $X$, if either $\lambda$
is a maximal uniquely ergodic measured lamination or a
weighted system $\sum c_i\g_i$ of simple closed
curves, then $gr_{t\lambda}(X)$ converges to
$[\lambda]$ or to $[\sum \g_i]$ respectively in
Thurston boundary.
\end{thA}

A natural question is then whether grafting rays are
quasi-geodesics with respect to the Teichm\"uller metric.

\begin{thB} {\rm (Theorem \ref{quasigeodesic})}
 If $\lambda=\sum c_i\g_i$ is a weighted
system of simple curves, then the grafting ray
$gr_{{t}\lambda}(X)$ is within a bounded distance from a geodesic
ray in $\cal T(S)$.
\end{thB}

The proofs follow the similar lines as in \cite{crs, Masur,DS}. To
prove the convergence properties, we need to estimate the length
of simple closed curves along the ray. We use the general
estimates of the lengths of curves given by Proposition
\ref{brokenarc}. Then we analyze in Propositions \ref{lengthbound}
and \ref{twists} how the different terms in this estimate behave
along the grafting rays.

To prove that grafting rays are quasi-geodesics, we estimate the
distance of these rays with   Teichm\"uller geodesics by using
Minsky's product region theorem \cite{Min}.

\section{Preliminaries}

\subsection{  Teichm\"uller space, metric and geodesics.}
Let $S$ be a smooth orientable surface with finitely
many punctures, of hyperbolic type. The Teichm\"uller
space of $S$ is the set $\mathcal {T}(S)$ of conformal
structures on $S$ up to conformal homeomorphisms
homotopic to the identity. Each conformal structure
determines a class of metrics compatible with the
conformal structure. By the uniformization theorem,
among these metrics there is a unique Riemannian
hyperbolic metric, and conversely, each hyperbolic
structure $\sigma$ on $S$ has an underlying conformal
structure. Thus, we can also consider $\teich$ as the
set of hyperbolic metrics on $S$, where two metrics
$\sigma, \sigma'$ are equivalent if there is an
isometry $(S,\sigma)\to (S,\sigma')$ isotopic to the
identity. As notation, when we have a curve $\g$ on a
surface $S$ and we consider a hyperbolic structure
$\s$ on $S$, then we will denote by $\ell_{\s}(\g)$
the hyperbolic length of the geodesic represenative of
$\g$ on $(S,\s)$. By abuse of notation, most times we
will also denote by $\g$ this geodesic representative.

Given two different points $X,X'\in \teich$, Teichm\"uller showed
that there is a unique quasiconformal homeomorphism $f_0$ between
them, homotopic to the identity,  with the smallest possible
maximal dilatation (or quasiconformal constant) $K[f_0] $. The
{\it Teichm\"uller distance} between $X,X'$ is then defined to be
$d_{\teich}(X,X')= \frac{1}{2} \log K[f_0] $.

To describe the extremal map $f_0$  Teichm\"uller made use of
quadratic differentials. A  meromorphic quadratic differential
$\varphi$ on a Riemann surface $X\in \teich$ has in local
coordinates the expression $ \varphi(z)dz^2$, where $\varphi(z)$
is meromorphic with simple poles  at most in the punctures of $S$.
At points $z_0\in S$ that are not zeros or poles of $\varphi$, we
can consider another coordinate chart $w(z)=\int_{z_0}^z
\sqrt{\varphi(z)}dz$, that determines two well defined foliations
on $S$:   the {\it horizontal foliation} along whose leaves ${\rm
Im} w$ is constant, and the {\it vertical foliation} along whose
leaves ${\rm Re} w$ is constant. There is also a transverse
measure defined on these foliations, making them two measured
foliations $\nu^+, \nu^-$: the {\it horizontal measure} of an arc
$\a=z(t)$ transverse to the horizontal foliation is defined as the
integral $\int_{\a} |Im\sqrt{ (\varphi(z(t))}|dt $, and similarly
for the vertical measure. The space $Q(X)$ of quadratic
differentials on $X$ is a complex vector space of complex
dimension $3g-3+p$, with $g$ the genus of $S$ and $p$   the number
of punctures.

We consider the norm given by $\int_S|\varphi|dxdy$, and let
$B^1(X)$ be the open unit ball and $\Sigma^1(X)$ the unit sphere.
Given a quadratic differential $\varphi\in \Sigma^1$ and a  number
$0\leq k<1$, the {\it Teichm\"uller deformation} of $X$ determined
by $\varphi$ and $k$ is the new complex structure $(X,\varphi,k)$
on $S$ defined by the charts $w'(w) = K^{1/2} {\rm Re}w + i
K^{-1/2} {\rm Im}w$, where $K=\frac{1+k}{1-k}$; that is, we expand
the horizontal lamination of $\varphi$ by a factor of $K^{1/2}$
and contract the vertical lamination by a factor of $K^{-1/2}$.
Notice that the construction gives also a quadratic differential
$\varphi'$ on $(X,\varphi,k)$, with norm 1, whose horizontal and
vertical measured foliations are $K^{-1/2}\nu^+$ and
$K^{1/2}\nu^-$, respectively. The identity on the topological
surface $S$ is the extremal map between $X$ and $(X,\varphi,k)$,
with maximal dilatation $K$.

Teichm\"uller existence and uniqueness theorems imply
that the map $\varphi\mapsto
(X,\frac{\varphi}{\|{\varphi}\|}, \|{\varphi}\|)$ is a
homeomorphism between $B^1(X)$ and $\teich$.

{\bf Teichm\"uller geodesics.}  Given a point $X\in \teich$, and a
measured foliation, it is proven in \cite{HM} that there is a
unique quadratic differential $\varphi\in \Sigma^1(X)$  whose
vertical foliation $\nu^-$ is projectively equivalent to the given
one. Then, the map $t\mapsto (X,\varphi,\frac{t}{t+2})$ with
$t\geq 0$ parameterizes a Teichm\"uller geodesic ray starting at
$X$. We use this parametrization for later purpose. We denote this
geodesic ray by ${\cal G}(\nu^-, X)$, and by ${\cal G}_t(\nu^-,X)$
the point on this geodesic that is the image of $t$.


\subsection{ Thurston's boundary of Teichm\"uller space.}
Let $\cal{ ML}(S)$ denote the space of measured laminations of $S$.
The hyperbolic length of closed geodesics extends by linearity and
continuity to the hyperbolic length of measured laminations.
Equally, the intersection number between closed geodesics extends to
the intersection number of measured laminations.  Thurston proved
that Teichm\"uller space can be compactified by the space $P\cal{
ML}$ of projective measured laminations, so that
$\overline{\teich}=\teich\cup P\cal{ ML}$ is homeomorphic to a
closed ball. Precisely, a sequence of marked hyperbolic surfaces
$(S,\s_n)$ converges to a projective measured lamination $[\nu]$ if
there is a sequence $c_n\to 0$ such that for any other measured
lamination $\a$,
$$
\lim_{n\to \infty}c_n\ell_{\s_n}(\a)=i(\a,\nu).
$$
As a consequence of the definition, if $(S,\sigma_n)\to [\nu]$, then
the length of any lamination $\beta$ satisfying $i(\beta,\nu)\not=0$
tends to infinity along the sequence.

In this paper, we will freely use the natural identification
between measured laminations and measured foliations.

\subsection{Twisting numbers and Fenchel-Nielsen
coordinates}\label{twisting}  Followig Minsky
\cite{Min} (see also \cite{crs}),
 we define the twisting number of a closed geodesic around another curve in a
 hyperbolic surface. Let $\sigma$ be a hyperbolic metric on a surface $S$
and $\gamma$ a homotopically nontrivial oriented
simple closed curve. Suppose a simple closed curve
$\alpha$ intersects $\gamma$ and geodesic
representatives $\gamma^\sigma$ and $\alpha^\sigma$
intersects at $x$. Take lifts of them to $\H^2$ and
denote them by $L_\gamma, L_\alpha$ respectively, so
that they intersect at a point which projects to $x$.
$L_\gamma$ has an orientation because $\g$ is oriented. Let $a_r,a_l$ be
endpoints of $L_\alpha$ to the right and left of
$L_\gamma$. Let $p\colon\H^2\ra L_\gamma$ be the orthogonal
projection, and define the signed {\it twisting
number} of $\alpha$ around $\g$ as
$$tw_{\sigma}(\alpha,\gamma)=\min_{\gamma^\sigma\cap \alpha^\sigma}
 \frac{p(a_r)-p(a_l)}{\ell_\sigma(\gamma)}.$$
 We will denote by $Tw_{\sigma}(\alpha,\gamma)$ the
 absolute value of $tw_{\sigma}(\alpha,\gamma)$.

We remark that one can also  define the twisting number of a
measured lamination around a $\g$ (in fact this number only
depends on the support of the measured lamination).

%
%

\bigskip
Fenchel-Nielsen coordinates are global coordinates for Teichm\"uller
space associated to a fixed {\it marking} of the surface. We recall
its definition here.
 Let $\{\gamma_1,\cdots,\gamma_N\}$ denote a system of simple, oriented,
 closed curves that decompose $S$ into a union of  pairs of pants.  A pair of pants $P$
  with boundary curves
$\gamma_1,\gamma_2,\gamma_3$ contains three unique homotopy classes of
 simple arcs
$\alpha_{12},\alpha_{23},\alpha_{13}$, called seams,
such that $\alpha_{ij}$ joins $\gamma_i$ to
$\gamma_j$. Fix a set of representatives of the seams
which match on opposite sides of $\gamma_i$; this
will determine a system of curves $\mu$, and we call the oriented curves $\gamma_i$
together with $\mu$  a {\it marking} of $S$.  Note that these are topological data.

Now, given a hyperbolic metric $\sigma$ on $S$, each
$\g_i$ and each seam have a unique
 geodesic representative $\g_i^{\sigma}, \alpha_{ij}^{\sigma}$, and $\mu$ has a
 unique representative going along the $ \alpha_{ij}^{\sigma}$ and some arcs
 along $\g_i^{\sigma} $. For each $j=1,\dots, N$, let
 $m_j(\sigma)$ be the signed length of the arc on $\g_j^{\sigma}$ mentioned above
 (the sign is given according to the orientation of $\g_j$).
 The {\it twist parameter}
$t_j(\sigma)$ is defined to be
$\frac{m_j(\sigma)}{\ell_\sigma(\gamma_j)}.$

 The {\it
Fenchel-Nielsen} coordinates for $\sigma$ with respect to the
marking $\{\gamma_1,\cdots,\gamma_N;\mu\}$ are defined to be
$$(\ell_\sigma(\gamma_1),\cdots,\ell_\sigma(\gamma_N),t_1
(\sigma),
\cdots,t_N(\sigma)).$$

{\bf Remark.} Minsky proved in   \cite{Min} that the
twist parameter $t_j(\sigma)$ is almost the same as
the twisting number of $\mu$ around $\g_j$; precisely,
$$|t_j(\sigma)-tw_{\sigma}(\mu,\gamma_j)|
\leq 1.$$ He also showed that  for any two curves $\a,\beta$
intersecting $\g_j$, the difference $|tw_{\sigma}(\a,\gamma_j )-
tw_{\sigma}(\beta,\gamma_j )|$ is independent of $\sigma$ up to a
bounded error (of 1). As a consequence, along a family of
hyperbolic surfaces, bounding the quantity
$t_j(\sigma)\ell_{\sigma}(\g_j)$ is equivalent to bounding
$tw_{\sigma}(\a,\gamma_j)\ell_{\sigma}(\g_j)$ for any curve $\a$
intersecting $\g_j$ if $\ell_{\sigma}(\g_j)$ is bounded.

\subsection{Estimate of the length of a curve}

In this paper we will always be dealing with
hyperbolic surfaces with the property that one
specific pants decomposition system of curves
$\{\g_i\}$ have length bounded above. Under this
condition, the length of any other curve $\alpha$ can
be approximated up to  a bounded additive error by the
length of a polygonal curve. We describe here this
estimate, which is mainly obtained in \cite{DS}.

For notations
 we will say that two functions $f,g$ have the same
order, denoted by $f\sim g$, if there exists a constant $C>0$ so
that $\frac{1}{C}f \leq g \leq C f$. We will also use $f=O(1)$ to
indicate that  $f$ is bounded.

Let $(S,\sigma)$ be a hyperbolic surface and $\{\g_1,\dots,\g_N\}$
a pants decomposition system. When $\g_i,\g_j$  bound a pant,
denote  by $H_{ij}$ the simple common perpendicular arc between
them. For any pant $P$ and any boundary curve $\g_j$, we also
consider the arc $H_{jj}$ perpendicular to $\g_j$ and separating
the other two boundary components of $P$.

Now, any simple closed curve $\a$ can be homotoped to
a polygonal curve $BA_{\a}$ made up of arcs $V_j$
running along the geodesics $\g_j$, and arcs $H_{ij}$
($i$ may be equal to $j$).  This polygonal curve is
uniquely determined if we don't allow backtrackings;
i.e., if, for instance,
  $BA_{\a}$ goes along $H_{ij}$, then once around $\g_j$,
and then comes back along $H_{ij}$, then we change this part by
$H_{ii}$. (The notation $BA$ stands for the term ``broken arc" used
in \cite{DS}).

\begin{proposition}\label{brokenarc}
Let $\{\g_1,\dots \g_N\}$ be a pants decomposition of $S$, let
$M>0$, and let $\a$ be a simple closed curve of $S$. Then there
exists a constant $C$ (depending on $\a$ and $M$) such that for any
hyperbolic surface $(S,\sigma)$ with
 $\ell_{\sigma}(\g_j)<M$ for all $j=1,\dots, N$, we have
$$
 |\ell_\sigma(\a)- \sum_{j=1}^N i(\a,\gamma_j)\big[2\log
\frac{1}{\ell_\sigma(\gamma_j)}
+Tw_{\sigma}(\a,\gamma_j)\ell_\sigma(\gamma_j)\big]  |\leq C .
$$
\end{proposition}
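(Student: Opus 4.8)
The plan is to estimate the hyperbolic length of $\a$ by comparing it with the length of the broken-arc representative $BA_\a$ constructed above, and then to compute the length of $BA_\a$ piece by piece. First I would argue that $\ell_\sigma(\a)$ equals the length of the geodesic in the homotopy class of $BA_\a$ up to a bounded additive error: since $\a$ is a fixed simple closed curve, its combinatorial intersection pattern with the pants curves $\{\g_j\}$ is fixed, so $BA_\a$ has a bounded number of segments, each of a controlled type, and standard hyperbolic trigonometry in each pair of pants (whose boundary lengths are all $<M$) bounds the difference between the polygonal length and the geodesic length by a constant depending only on $\a$ and $M$. Here one uses that when a geodesic crosses a pair of pants it stays in a neighborhood of the union of the seams and the boundary geodesics of controlled size, and that the ``no backtracking'' normalization of $BA_\a$ prevents spurious cancellations.

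Next I would compute the length of $BA_\a$ as the sum of the lengths of its $V_j$-pieces (arcs running along $\g_j$) and its $H_{ij}$-pieces (the common perpendiculars). The number of times $BA_\a$ runs along $\g_j$ is comparable to $i(\a,\g_j)$, and the total signed winding of these pieces is exactly what the twisting number $Tw_\sigma(\a,\g_j)$ measures: the $V_j$-contribution is $i(\a,\g_j)\,Tw_\sigma(\a,\g_j)\,\ell_\sigma(\g_j)$ up to a bounded error, since each passage contributes roughly $tw_\sigma(\a,\g_j)\,\ell_\sigma(\g_j)$ plus a term absorbed into the constant, and the signs combine (the absolute value appears because, for a simple curve, the passages along a given $\g_j$ wind consistently up to bounded error). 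For the $H_{ij}$-pieces: the length of the common perpendicular between two boundary geodesics of a pair of pants with all boundary lengths $<M$ behaves like $2\log\frac{1}{\ell_\sigma(\g_i)}+2\log\frac{1}{\ell_\sigma(\g_j)}$ up to a bounded error (this is the standard collar/cusp estimate — a perpendicular from $\g_j$ must cross the collar of width $\sim\log\frac1{\ell_\sigma(\g_j)}$ around $\g_j$), and since each $\g_j$ is crossed $i(\a,\g_j)$ times, summing over all perpendicular arcs of $BA_\a$ produces the term $\sum_j i(\a,\g_j)\cdot 2\log\frac{1}{\ell_\sigma(\g_j)}$, again up to an error bounded in terms of $\a$ and $M$.

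Assembling these pieces gives
$$
\ell_\sigma(\a)=\sum_{j=1}^N i(\a,\g_j)\Big[2\log\frac{1}{\ell_\sigma(\g_j)}+Tw_\sigma(\a,\g_j)\ell_\sigma(\g_j)\Big]+O(1),
$$
with the implied constant depending only on $\a$ and $M$, which is the claim. Most of this is extracted directly from \cite{DS}; the role of this proposition here is mainly to package that estimate in the uniform form needed later. The main technical obstacle is the bookkeeping for the $V_j$-terms: one must check that the many passages of $BA_\a$ along a fixed $\g_j$ contribute their winding \emph{coherently}, so that the total is $Tw_\sigma(\a,\g_j)\ell_\sigma(\g_j)$ (with an honest absolute value) and not merely a sum of signed terms that could be much smaller — this is where simplicity of $\a$ and the precise definition of $tw_\sigma$ via the projection $p$ to $L_\g$ are used, together with the remark (following Minsky) that changing the reference curve in the twisting number only costs a bounded error. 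The perpendicular-length estimates and the comparison between $\a$ and $BA_\a$ are routine hyperbolic geometry once the collar lemma is invoked.
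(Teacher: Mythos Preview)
Your proposal is correct and follows essentially the same route as the paper: approximate $\ell_\sigma(\a)$ by $\ell_\sigma(BA_\a)$ (the paper invokes Lemma~5.1 of \cite{DS} for this step), estimate the $H_{ij}$-arcs by pants trigonometry (Lemma~5.4 of \cite{DS}), and handle the $V_j$-arcs via the twist parameter and Minsky's comparison of $t_j(\sigma)$ with $tw_\sigma(\cdot,\g_j)$. One arithmetic slip to fix: the correct estimate is $\ell_\sigma(H_{ij})=\log\frac{1}{\ell_\sigma(\g_i)}+\log\frac{1}{\ell_\sigma(\g_j)}+O(1)$, without the factors of $2$ you wrote; your final sum $\sum_j 2\,i(\a,\g_j)\log\frac{1}{\ell_\sigma(\g_j)}$ nevertheless comes out right because each crossing of $\g_j$ by $\a$ contributes \emph{two} $H$-arc endpoints on $\g_j$ (one incoming, one outgoing), a count you implicitly halved.
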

\begin{proof}
Because the lengths of all the curves $\g_i$ are bounded above, the
orthogonal arcs between each two of them are greater than some
constant $D$, and  we can apply apply Lemma 5.1 in \cite{DS}. Then
we get that
\begin{equation}\label{BAequation}
|\ell_\sigma(\a)-  \ell_{\sigma}(BA_{a}) |\leq C' ,
\end{equation}
for some constant $C'$.

Next, we analyze  the lengths of the arcs $H_{ij}$ and $V_j$ in
$BA_{\a}$.  Since $\ell_{\s}(\g_j)$ are bounded above, it can be
shown by using the trigonometric formulae for pairs of pants that
$$
\ell_{\s}(H_{ij})=\log\frac{1}{\ell_{\s}(\g_i)}+
\log\frac{1}{\ell_{\s}(\g_j)}+O(1)
$$
(see Lemma 5.4 in \cite{DS}).

On the other hand, the length of the  each arc $V_j$ is close to
\break $Tw_{\sigma}(\a,\gamma_j)\ell_{\s}(\g_j)$. To see this,
observe that we can choose a marking $\mu$ associated to $\{\g_i\}$
so that, for any surface,
$$ |\frac{\ell_{\s}(V_j)}{\ell_{\s}(\g_j)}- t_j(\s) |<1 .$$
  Since $|t_j(\s)-tw_{\s}(\mu,\g_j)|<1$, we obtain that the
difference between $\frac{\ell_{\s}(V_j)}{\ell_{\s}(\g_j)}$ and
$Tw_{\s}(\mu,\g_j)$ is independent of the surface, up to bounded
error. Since this is also true for the difference between
$Tw_{\s}(\mu,\g_j)$ and $Tw_{\s}(\a,\g_j)$, we finally get that
$$   |Tw_{\s}(\a,\g_j)-\frac{\ell_{\s}(V_j)}{\ell_{\s}(\g_j)} |<O(1).$$
Plugging  the previous estimates in
(\ref{BAequation}), we finally obtain the result.

\end{proof}

An immediate consequence of the previous estimate that we will use
later is the following.

\begin{corollary}\label{easytwist}
Let $\{\g_1,\dots,\g_N;\mu\}$ be a marking for a
surface $S$, and $(S,\s_n)\in \teich$ a sequence such
that $\ell_{\s_n}(\g_j)$ is bounded above for all $j
$. Suppose further that, for some $k\in\{1,\dots,N\}$,
$\ell_{\s_n}(\g_k)$ is bounded below away from zero
and that there is a curve $\beta$  intersecting $\g_k
$ with $\ell_{\s_n}(\beta)$   bounded above. Then, the
twisting number $tw_{\s_n}(\beta,\g_k)$ and the twist
parameter $t_k(\s_n)$ are bounded.
\end{corollary}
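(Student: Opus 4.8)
The plan is to apply Proposition~\ref{brokenarc} to the curve $\alpha=\beta$ and to exploit the fact that, under the standing hypothesis $\ell_{\sigma_n}(\gamma_j)<M$ for all $j$, each summand appearing there is bounded below by a constant depending only on $\beta$ and $M$. Concretely, Proposition~\ref{brokenarc} provides a constant $C$ with
$$
\Big|\,\ell_{\sigma_n}(\beta)-\sum_{j=1}^N i(\beta,\gamma_j)\Big[2\log\frac{1}{\ell_{\sigma_n}(\gamma_j)}+Tw_{\sigma_n}(\beta,\gamma_j)\,\ell_{\sigma_n}(\gamma_j)\Big]\Big|\le C
$$
for every $n$. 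Since $0<\ell_{\sigma_n}(\gamma_j)<M$, the logarithmic term $2\log\frac{1}{\ell_{\sigma_n}(\gamma_j)}$ is bounded below by a constant $-b$ independent of $n$, while $Tw_{\sigma_n}(\beta,\gamma_j)\,\ell_{\sigma_n}(\gamma_j)\ge 0$; hence the $j$-th summand is at least $-b\,i(\beta,\gamma_j)$, so the sum of these termwise lower bounds is a constant depending only on $\beta$ and $M$.

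First I would turn this into an upper bound on each individual summand. If $L$ is an upper bound for $\ell_{\sigma_n}(\beta)$, the displayed inequality gives $\sum_j i(\beta,\gamma_j)\big[\cdots\big]\le L+C$; combining this with the termwise lower bounds just obtained, every single summand is at most a constant $C_1$ independent of $n$. Specialising to $j=k$, using $i(\beta,\gamma_k)\ge 1$ (because $\beta$ intersects $\gamma_k$) together with $2\log\frac{1}{\ell_{\sigma_n}(\gamma_k)}\ge -b$, we obtain $Tw_{\sigma_n}(\beta,\gamma_k)\,\ell_{\sigma_n}(\gamma_k)\le C_1+b$. Finally, the hypothesis that $\ell_{\sigma_n}(\gamma_k)$ is bounded below, say $\ell_{\sigma_n}(\gamma_k)\ge\epsilon>0$, yields $Tw_{\sigma_n}(\beta,\gamma_k)\le (C_1+b)/\epsilon$, so the twisting number $tw_{\sigma_n}(\beta,\gamma_k)$ is bounded.

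It then remains to transfer this bound to the twist parameter $t_k(\sigma_n)$, which I would do via the two inequalities of Minsky recalled in Section~\ref{twisting}: $|t_k(\sigma_n)-tw_{\sigma_n}(\mu,\gamma_k)|\le 1$, and, since $\mu$ and $\beta$ both intersect $\gamma_k$, the difference $|tw_{\sigma_n}(\mu,\gamma_k)-tw_{\sigma_n}(\beta,\gamma_k)|$ equals a constant depending only on $\mu,\beta,\gamma_k$ up to an error of $1$. Chaining these with the bound on $tw_{\sigma_n}(\beta,\gamma_k)$ gives a uniform bound on $|t_k(\sigma_n)|$. I do not anticipate a genuine obstacle here: the only delicate points are the sign of the logarithmic terms---controlled precisely by the upper bound $M$ on the pants-curve lengths, which is exactly why that hypothesis is needed---and checking that the constants manufactured along the way depend only on $\beta$, $M$, $\epsilon$, and the fixed marking, and not on $n$; both are immediate from the estimates above.
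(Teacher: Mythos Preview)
Your argument is correct and is precisely the detailed verification of what the paper declares an ``immediate consequence'' of Proposition~\ref{brokenarc}: you bound each summand below using $\ell_{\sigma_n}(\gamma_j)<M$, isolate the $k$-th summand to get $Tw_{\sigma_n}(\beta,\gamma_k)\ell_{\sigma_n}(\gamma_k)$ bounded, divide by the lower bound $\epsilon$, and finish with Minsky's comparison between $t_k$ and $tw_{\sigma_n}(\mu,\gamma_k)$. There is no gap and no difference in approach.
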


{\bf Remark.} For arbitrary  surfaces (not satisfying the
condition explained above) the length of a simple closed curve can
also be estimated in terms of a ``thin-thick" decomposition of the
surface. For the length of the curve in the thin parts, the
estimates are the same as the one given above, but now it appear
new terms corresponding to the length of the arcs crossing the
thick parts. See \cite{cr} or \cite{crs} for detailed
explanations.

\subsection{Product region theorem} Let ${\cal A}=\{\g_1,\dots,\g_k\}$ be a collection
of disjoint, homotopically distinct, simple closed curves on $S$.
Given $\epsilon_0>0$ let $\cal T_{thin}(\cal A,\epsilon_0)\subset
\cal T(S)$ be the subset on which all curves in $\cal A$ have
length at most $\epsilon_0$. We extend $\cal A$ to a pants
decomposition system $\{\g_1,\dots,\g_k,$ $\g_{k+1},\dots,\g_N\}$,
choose a marking $\mu$ and consider Fenchel-Nielsen coordinates
with respect to this marking. Let $S_{\cal A}$ denote the surface
(possibly disconnected) obtained from $S$ by pinching all the
curves in $\cal A$. The marking chosen on $S$ induces  a marking
$\mu_{\cal A}$ on  $S_{\cal A}$, and we take Fenchel-Nielsen
coordinates on ${\cal T}(S_{\cal A})$ with respect to this
marking. Then we can define the map $\Pi_0\colon {\cal T}(S)\to
{\cal T}(S_{\cal A})$ by associating to a surface $(S,\sigma)$ the
surface in ${\cal T}(S_{\cal A})$ with Fenchel-Nielsen coordinates
$(\ell_\sigma(\g_{k+1}), \dots,\ell_\sigma(\g_{N}),$
$t_{\g_{k+1}}(\sigma), \dots,t_{\g_{N}}(\sigma))$ with respect to
the $\mu_{\cal A}$ (that is, we just
  forget  the coordinates corresponding to the curves
   in $\cal A$).   In \cite{Min}, the following
map is considered
$$\Pi:\cal T(S) \ra \cal T(S_{\cal A})\times H_{\g_1}\times
\cdots \times H_{\g_k},$$ where  $H_{\g_j}$ is a copy
of  the upper half plane, the first component of $\Pi$
is $\Pi_0$, and the remaining components
  $\Pi_{\g_j}:\cal T(S)\ra H_{\g_j}$ are
defined by
$$\Pi_{\g_j}(\sigma)=t_{\g_j}(\sigma)+i \frac{1}{\ell_\sigma(\g_j)}.$$
Let $d_{H_{\g_j}}$ be half the usual hyperbolic metric on
$H_{\g_j}$. Minsky proved \cite{Min} that the Teichm\"uller metric
on the thin parts can be approximated by the sup metric on the
previous product space. More precisely:
\begin{theorem} Given  $\epsilon_0$ sufficiently small, then for
any metrics $\sigma,\tau \in \cal T_{thin}(\cal
A,\epsilon_0)$, we have
$$d_{\cal T(S)}(\sigma,\tau)=\max_{\g\in \cal
A}\{d_{\cal T(S_{\cal
A})}(\Pi_0(\sigma),\Pi_0(\tau)),d_{H_\g}(\Pi_\g(\sigma),
\Pi_\g(\tau))\}\pm O(1).$$
\end{theorem}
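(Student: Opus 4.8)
I would follow Minsky's argument \cite{Min}. The plan is to decompose both the Teichm\"uller metric and the extremal lengths that control it into a ``thick'' contribution, coming from the surface $S_{\mathcal A}$, and ``annular'' contributions, one from the collar of each short curve $\g_j$, and then to match these pieces with the factors of the product. The starting point is Kerckhoff's formula
$$d_{\mathcal T(S)}(\sigma,\tau)=\tfrac12\log\,\sup_{\mu}\frac{\mathrm{Ext}_\tau(\mu)}{\mathrm{Ext}_\sigma(\mu)},$$
the supremum running over all measured foliations $\mu$, so that everything reduces to understanding the extremal length $\mathrm{Ext}_\sigma(\mu)$ for $\sigma\in\mathcal T_{thin}(\mathcal A,\epsilon_0)$.

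For each $j$ the Collar Lemma gives a standard embedded annulus $\mathcal C_j$ around $\g_j$ of conformal modulus $\tfrac{1}{\ell_\sigma(\g_j)}+O(1)$; removing the $\mathcal C_j$ leaves a surface of uniformly bounded conformal geometry whose completion is conformally $\Pi_0(\sigma)\in\mathcal T(S_{\mathcal A})$, up to bounded error. The key estimate I would establish is a decomposition
$$\mathrm{Ext}_\sigma(\mu)\ \asymp\ \mathrm{Ext}_{\Pi_0(\sigma)}(\mu_0)\ +\ \sum_{j=1}^k \frac{i(\mu,\g_j)^2\,\big|\Pi_{\g_j}(\sigma)\big|^2+c_j(\mu)^2}{\mathrm{Im}\,\Pi_{\g_j}(\sigma)},$$
with multiplicative constants independent of $\sigma$; here $\mu_0$ is the part of $\mu$ lying outside the collars, $c_j(\mu)=O(1)$ records how $\mu\cap\mathcal C_j$ traverses $\mathcal C_j$, and the $j$-th summand is the extremal length of a curve system that crosses $\mathcal C_j$ exactly $i(\mu,\g_j)$ times while twisting $\mathrm{Re}\,\Pi_{\g_j}(\sigma)$ times in a flat annulus of modulus $\mathrm{Im}\,\Pi_{\g_j}(\sigma)$. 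This is the extremal-length analogue of Proposition \ref{brokenarc}: there the term $2\log\tfrac{1}{\ell_\sigma(\g_j)}$ is twice the logarithm of the collar modulus and $Tw_\sigma(\mu,\g_j)\,\ell_\sigma(\g_j)$ is the twist, so the conformal geometry near $\g_j$ is recorded exactly by the point $\Pi_{\g_j}(\sigma)=t_{\g_j}(\sigma)+i/\ell_\sigma(\g_j)$.

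Granting the decomposition, the upper bound $d_{\mathcal T(S)}(\sigma,\tau)\le\max\{\cdots\}+O(1)$ follows by gluing quasiconformal maps: the Teichm\"uller map $\Pi_0(\sigma)\to\Pi_0(\tau)$ on the thick part, and on each $\mathcal C_j$ an affine stretch-and-twist map realizing $d_{H_{\g_j}}(\Pi_{\g_j}(\sigma),\Pi_{\g_j}(\tau))$ --- the relevant input being the explicit computation that the Teichm\"uller metric of a flat annulus in the modulus--twist coordinate is, up to bounded error, half the hyperbolic metric on $H_{\g_j}$ (the $\tfrac12$ being the one in Kerckhoff's formula); interpolating in a bounded sub-annulus costs only $O(1)$. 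For the lower bound I would feed test foliations into Kerckhoff's formula: a near-extremal foliation for $d_{\mathcal T(S_{\mathcal A})}(\Pi_0(\sigma),\Pi_0(\tau))$, pushed into $S$, has almost the same extremal-length ratio by the decomposition, and for each $j$ a foliation supported in $\mathcal C_j$ with the appropriate crossing and twisting realizes $d_{H_{\g_j}}(\Pi_{\g_j}(\sigma),\Pi_{\g_j}(\tau))$. Since the decomposition also shows that a near-maximizer of the global ratio may be taken concentrated in one factor, the global supremum equals, up to $O(1)$, the maximum of the factor distances, as claimed.

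The main obstacle is the uniformity of the decomposition as the $\ell_\sigma(\g_j)\to 0$: one must control the thick-part extremal length $\mathrm{Ext}_{\Pi_0(\sigma)}(\mu_0)$ independently of how $\mu$ enters the collars, and --- this is precisely what upgrades the $\sup$ to a $\max$ --- rule out that a foliation ``spread across'' several factors beats every single-factor competitor by more than a bounded factor. Pinning down the annular computation so that it yields exactly half the hyperbolic metric on $H_{\g_j}$ is the other delicate point; once these are in hand, assembling the pieces with bounded distortion in the transition annuli is routine.
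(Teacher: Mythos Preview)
The paper does not prove this theorem at all: it is stated as Minsky's product region theorem and attributed to \cite{Min}, with no argument given. So there is no ``paper's own proof'' to compare against; the authors use it as a black box.

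Your sketch is a reasonable outline of Minsky's actual argument in \cite{Min}: Kerckhoff's extremal-length formula, a thick--thin decomposition of extremal length, the identification of the annular contribution with the hyperbolic metric on $H_{\g_j}$, and gluing of quasiconformal maps for the upper bound together with test foliations for the lower bound. As a summary this is on target, though as written it is a plan rather than a proof --- the ``key estimate'' you display is precisely the hard content of Minsky's paper, and establishing it with uniform constants (in particular the point you flag, that a foliation spread across several factors cannot beat the single-factor competitors by more than a bounded amount) requires the machinery developed there. If the intent is simply to cite the result, that is exactly what the paper does; if the intent is to reprove it, the sketch would need substantial filling in along the lines of \cite{Min}.
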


A  consequence   of the theorem (see \cite{crs}) that we will use
later  is:  if $\sigma_1,\sigma_2\in\cal T_{thin}(\g,\epsilon_0)$
and for some $\nu\in \cal{ML}(S)$ with
$Tw_{\sigma_i}(\nu,\g)\ell_{\sigma_i}(\g)=O(1)$, then
\begin{equation}\label{Teichmullerdistance}
d_{H_\g}(\Pi_\g(\sigma_1),\Pi_\g(\sigma_2))=|\log
\frac{\ell_{\sigma_1}(\g)}{\ell_{\sigma_2}(\g)}|\pm O(1).
\end{equation}

\subsection{Grafting}\label{grafting} We will describe a map $Gr\colon\cal{
ML }(S)\times\teich \to P(S) $ and its composition
with the projection to $\teich$, $gr\colon\cal{ ML
}(S)\times \teich\to \teich$. For a simple closed
geodesic $\gamma$ on a hyperbolic surface $X$,
$gr_{t\gamma}(X)$ is constructed by cutting $X$ along
$\gamma$ and inserting a Euclidean right cylinder
$A(t)$ of height $t$ and circumference
$\ell_X(\gamma)$, with no twist. The Euclidean and
hyperbolic metric piece together continuously to give
a well-defined conformal structure. To define the {\it
projective} surface $Gr_{\g}(X)$, we first define a
projective model $A(t)$ of the inserted cylinder as
the quotient of the sector $\tilde A(t)=\{z\in \bc^* :
{\rm Arg }(z)\in [\frac{\pi}{2},\frac{\pi}{2}+t]\}$ by
the group generated by $\tau(z)=e^{\ell_{X}(\g)}z$
(when $t\geq 2\pi$, $\tilde A(t)$ must be interpreted
as multi-sheeted).  Then we glue the projective
cylinder $A(t)$ with the fuchsian structure on
$X-\{\g\}$.
 Identifying the
universal cover of $X$ with $\H^2$ so that $i\R$ is a component of
the lift of  $ \gamma$, a local  model for grafting is obtained by
cutting along $i\R$, applying the map
$$ z \mapsto \left\{
\begin{array}{ccl}
e^{it}z, & {\rm if }&{\rm Arg}(z)>\pi/2 \\
z,& {\rm if }&{\rm Arg}(z)\leq \pi/2
\end{array}
  \right.$$
and inserting the sector $\tilde A(t)$. Notice that the metric
$|dz|/|z|$ on $\tilde A(t)$ makes $\tilde A(t)/\langle \tau \rangle$
into a Euclidean cylinder of height $t$ and circumference
$\ell_{X}(\g)$, so that $Gr_{t\g}(X)$ is conformally identical to
$gr_{t\g}(X)$.


Grafting can be extended to general measured laminations by
continuity, giving
$$gr:\mathcal{ML}(S)\times\mathcal{T}(S)\rightarrow
\mathcal{T}(S).$$

Given a  projective structure $M$ on a surface, we can associate
two metrics to it: the projective (or Thurston) metric, and the
Kobayahi metric which coincides with the  hyperbolic metric
compatible with the underlying complex structure. We recall their
definitions (see \cite{Ta}).

For a tangent vector $v$ at a point $x\in M$, the {\it projective}
length is defined as the infimum of  the hyperbolic length of
vectors $v'$ in $T\H^2$ such that there exists a projective map
$f\colon\H^2 \rightarrow M$ sending $v'$ to $v$. The {\it
hyperbolic}   length is defined in the same way, but allowing $f$
to be  any holomorphic map. It is clear from the definition that
the hyperbolic length is less than or equal to the projective
length. On the other hand, the projective metric on $Gr_{t\g}(X)$,
obtained from $X$ with  a Euclidean cylinder inserted  along $\g$,
is just the combination of the hyperbolic metric of $X$ and the
flat metric of the cylinder. Also, as  a consequence of the
definition of the Kobayashi metric, any holomorphic map between
hyperbolic surfaces is distance decreasing.


 Recall that the {\it modulus} of the annulus
$A_r=\{z\in \bc : r<|z|<1\}$ is ${\rm mod}(A)
=\frac{1}{2\pi}\log\frac{1}{r}$, and the length of its core
geodesic with respect to its hyperbolic structure  is
$\ell_{A_r}(\g)=\frac{\pi}{{\rm mod}(A_r)}$. Also, the modulus of
a
  Euclidean right cylinder $E$ of height $t$ and circumference $\ell$ is
  ${\rm mod}(E)=t/\ell$.

For a grafted surface $gr_{t\g}(X)$ along a closed curve $\g$, we
can always consider a conformal homeomorphism $g$ from $ A_r$ to the
inserted    Euclidean cylinder $E $  (where $r$ is   chosen so that
$ {\rm mod}(A_r)={\rm
 mod}(E)$). Since
 $g$ is conformal, it is distance decreasing
with respect to the hyperbolic metric on $A_r$ and the Kobayashi
metric on $Gr_{t\g}(X)$. Therefore, we have the following estimate
for the length of $\g$ on $gr_{t\g}(X)$
$$
\ell_{gr_{t\g}(X)}(\g)\leq \ell_{A_r}(\g) =\frac{\pi}{ t}\ell_X(\g)
.
$$
We will obtain a slightly better estimate in Proposition
\ref{lengthbound}.

\medskip

Given a measured lamination $\lambda$ and a point
$X\in \teich$, we define the {\it grafting ray}   with
base $X$ determined by $\lambda$ as the set
$${\cal R}_{\lambda}(X)=\{gr_{t\lambda}(X) \;|\; t\geq 0 \},$$
and denote by ${\cal R_t}$ the surface
$gr_{t\lambda}(X)$.

\section{grafting and Thurston boundary of Teichm\"uller
space}

We  begin with some  estimates of the length of measured
laminations along grafting rays.
\begin{proposition}[McMullen \cite{mcm}]\label{ctm} For any
$\alpha$, $\gamma\in \mathcal{ML}(S)$ and
$X\in\mathcal{T}(S)$, we have
$$\ell_{gr_\g(X)}(\alpha)\leq\ell_X(\alpha)+i(\alpha,\g)$$
The inequality is strict if both $\alpha$ and $\g$ are nonzero.
\end{proposition}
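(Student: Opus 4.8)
The plan is to exploit the conformal (hence distance-nonincreasing) maps between $\H^2$, the grafted surface, and suitable subsurfaces. First I would prove the inequality for the case when $\g$ is a simple closed curve and $\alpha$ is \emph{also} a simple closed geodesic on $X$; the general case then follows by the density of weighted simple closed curves in $\cal{ML}(S)$ and the continuity of both sides of the inequality in $\alpha$ and $\g$ (the length of a measured lamination on a fixed surface and the intersection number are both continuous, and grafting extends continuously), so the non-strict inequality passes to the limit. So fix a simple closed geodesic $\g$ on $X$ and let $gr_\g(X)$ be obtained by inserting the Euclidean cylinder of height $1$ and circumference $\ell_X(\g)$ (the statement as written uses $t=1$; the general $t\g$ case is just the same argument with cylinder height $t$).

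The main step is to build a test curve on $gr_\g(X)$ in the homotopy class of $\alpha$ whose \emph{Kobayashi} length is at most $\ell_X(\alpha)+i(\alpha,\g)$, and then use the fact that the geodesic representative of $\alpha$ on $gr_\g(X)$ minimizes length. Here is the candidate: take the $X$-geodesic representative of $\alpha$, which crosses $\g$ exactly $i(\alpha,\g)$ times. When we cut $X$ along $\g$ and insert the Euclidean cylinder $E$ of height $1$, each of these $i(\alpha,\g)$ crossing points becomes an arc that must traverse $E$; the shortest way to cross a right cylinder of height $1$ transversally is a straight segment of Euclidean length $1$. So the broken curve consisting of the $X$-geodesic pieces of $\alpha$ (total hyperbolic length $\ell_X(\alpha)$) together with $i(\alpha,\g)$ vertical segments across $E$ (total Euclidean length $i(\alpha,\g)$) is freely homotopic to $\alpha$ on $gr_\g(X)$, and its length measured in the \emph{projective (Thurston) metric} — which on $gr_\g(X)$ is exactly the hyperbolic metric of $X$ glued to the flat metric of $E$, as recalled in the Grafting subsection — is at most $\ell_X(\alpha)+i(\alpha,\g)$. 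Since the Kobayashi (hyperbolic) metric on $gr_\g(X)$ is dominated by the projective metric, the hyperbolic length of this broken curve is also $\le \ell_X(\alpha)+i(\alpha,\g)$, and hence so is the hyperbolic length of the $gr_\g(X)$-geodesic in the class of $\alpha$, which is $\ell_{gr_\g(X)}(\alpha)$.

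For the strictness: when both $\alpha$ and $\g$ are nonzero and $\alpha$ actually crosses $\g$, the broken curve above is not a geodesic in the projective metric (it has corners where the $X$-pieces meet the cylinder), so its projective length already strictly exceeds $\ell_{gr_\g(X)}(\alpha)$ measured projectively; and when $i(\alpha,\g)=0$ one instead uses that the holomorphic inclusion of the hyperbolic subsurface $X\setminus\g \hookrightarrow gr_\g(X)$ is a proper holomorphic map that is not a covering, hence strictly distance-decreasing, giving $\ell_{gr_\g(X)}(\alpha)<\ell_X(\alpha)$. In either case the inequality is strict. The part I expect to require the most care is making the ``broken curve'' argument rigorous across the gluing locus — checking that the concatenation of hyperbolic geodesic arcs in $X\setminus\g$ with vertical Euclidean segments in $E$ is genuinely a rectifiable curve in $gr_\g(X)$ of the asserted projective length and is freely homotopic to $\alpha$ — together with nailing down the continuity/density step that promotes the simple-closed-curve case to arbitrary measured laminations. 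Everything else is a direct application of the distance-decreasing property of conformal maps and the explicit description of the projective metric on a grafted surface recorded earlier in the paper.
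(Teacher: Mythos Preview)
Your proposal is correct and follows essentially the same route as the paper's proof: for $\gamma$ a simple closed curve, observe that the projective (Thurston) length of $\alpha$ on $Gr_\gamma(X)$ is $\ell_X(\alpha)+i(\alpha,\gamma)$, invoke the pointwise inequality hyperbolic $\leq$ projective, and then extend to general laminations by continuity. You spell out the test-curve construction and the strictness argument in more detail than the paper (which leaves strictness implicit), but the core idea is identical.
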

\begin{proof}
When $\gamma$ is a  closed curve, then the projective length of
$\alpha$ at $Gr_{t\g}(X)$ is $\ell_X(\alpha)+ti(\a,\g)$, and the
result follows because the hyperbolic length on $Gr_{t\g}(X)$ is
smaller than the projective length. For general laminations, the
result follows by continuity.
\end{proof}
\begin{corollary}\label{cor1}
Consider the grafting ray $\cal R_{\g}(X)$. We have:
\begin{itemize}
\item[(a)] if $\alpha, \g\in\cal{ML}$ and $i(\a,\g)=0$, then
$\ell_{gr_{t\g}(X)}(\alpha)<\ell_X(\alpha)$ (so, the length of $\a$
is bounded above along the grafting ray);
\item[(b)] if $\a,\g$ are disjoint simple closed curves,
 then there exists a
 constant $C>0$ such that
$\ell_{gr_{t\g}(X)}(\alpha)>C$, for all $t>0$.
\end{itemize}
\end{corollary}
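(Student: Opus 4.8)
For part (a), the plan is to apply Proposition \ref{ctm} directly. Since $i(\a,\g)=0$, the inequality $\ell_{gr_{t\g}(X)}(\a)\leq \ell_X(\a)+i(\a,t\g)=\ell_X(\a)$ gives the bound immediately, and the inequality is strict as long as $\a$ is nonzero. So there is essentially nothing to do here beyond invoking the previous proposition with $t\g$ in place of $\g$ and noting $i(\a,t\g)=t\,i(\a,\g)=0$.

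For part (b), the idea is to bound the hyperbolic length of $\a$ \emph{below} on $gr_{t\g}(X)$ by comparing with the projective (Thurston) metric, which on $Gr_{t\g}(X)$ is explicitly the hyperbolic metric on $X\setminus\g$ glued to the flat metric on the inserted cylinder. Here the key point is that, since $\a$ and $\g$ are disjoint simple closed curves, a geodesic representative of $\a$ for the metric on $X$ stays entirely within $X\setminus\g$, so its projective length on $Gr_{t\g}(X)$ is at most $\ell_X(\a)$ (in fact it equals $\ell_X(\a)$, since the cylinder is not touched). But the hyperbolic (Kobayashi) length is bounded above by the projective length in the wrong direction for a lower bound — so instead I would argue as follows: the collar lemma, or more simply a compactness/continuity argument, shows that on \emph{any} hyperbolic surface the length of a fixed homotopically nontrivial simple closed curve is bounded below only in terms of... no — that is false in general. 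The correct route is: because $i(\a,\g)=0$, part (a) shows $\ell_{gr_{t\g}(X)}(\a)$ is bounded \emph{above}, hence the surfaces $gr_{t\g}(X)$ all have a definite-length curve $\a$; were $\ell_{gr_{t\g}(X)}(\a)\to 0$ along a subsequence $t_n$, then the thin part containing $\a$ would force (via the collar lemma) any curve crossing $\a$ to have length $\to\infty$ — but one can instead directly use that grafting along $\g$ disjoint from $\a$ fixes a collar of $\a$: the hyperbolic metric on $gr_{t\g}(X)$ restricted near $\a$ dominates the hyperbolic metric of the standard collar of a geodesic of length $\ell_{gr_{t\g}(X)}(\a)$, and a short $\a$ would need a wide embedded collar that cannot fit, because the complementary region $X\setminus(\g\cup\a)$ has bounded geometry coming from $X$. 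Concretely: embed a fixed annular neighborhood $N$ of $\a$ in $X\setminus\g$ with definite modulus $m>0$ (possible since $\a,\g$ disjoint); this same $N$ embeds conformally in $gr_{t\g}(X)$, so $\mathrm{mod}_{gr_{t\g}(X)}(\a)\geq m$, whence $\ell_{gr_{t\g}(X)}(\a)\leq \pi/m$ — that again gives an upper bound. For the \emph{lower} bound, use that the modulus of the maximal embedded annulus about $\a$ in $gr_{t\g}(X)$ is also bounded \emph{above}: any embedded annulus about $\a$ avoiding $\g$ lifts to an embedded annulus about $\a$ in the fixed surface $X$ (since grafting only inserts material along $\g$, disjoint from $\a$), so its modulus is at most $\mathrm{mod}_X(\a)<\infty$; and an embedded annulus about $\a$ that does meet $\g$ can be truncated. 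Hence $\mathrm{mod}_{gr_{t\g}(X)}(\a)\leq M_0$ for a constant $M_0$ depending only on $X,\a$, giving $\ell_{gr_{t\g}(X)}(\a)\geq \pi/M_0=:C>0$ for all $t$.

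The main obstacle I anticipate is making the last modulus comparison fully rigorous: one must check that a maximal embedded annulus (or a curve realizing the extremal length of $\a$) in $gr_{t\g}(X)$ can be modified so as to be disjoint from the inserted cylinder without losing more than a bounded amount of modulus, using that the cylinder is glued in along the single curve $\g$ which has zero intersection with $\a$. This is where the disjointness hypothesis in (b) — as opposed to merely $i(\a,\g)=0$ for laminations — is convenient, since it lets one work with a fixed topological collar of $\a$ in the surface-with-boundary $X\setminus\g$ and transport it verbatim into every $gr_{t\g}(X)$. Once that is set up, the estimate $\ell \sim \pi/\mathrm{mod}$ for the extremal annulus, together with the uniform upper bound on the modulus, closes the argument.
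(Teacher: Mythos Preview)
Your argument for part (a) is correct and identical to the paper's.

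For part (b), however, you pass right by the paper's argument and then take a harder road. The sentence you wrote and abandoned --- ``were $\ell_{gr_{t\g}(X)}(\a)\to 0$ \ldots\ any curve crossing $\a$ would have length $\to\infty$'' --- is exactly the paper's proof, with one missing ingredient: you must choose the crossing curve to be \emph{disjoint from $\g$}. Since $\a$ and $\g$ are disjoint simple closed curves, there exists a simple closed curve $\b$ with $i(\b,\a)>0$ and $i(\b,\g)=0$ (e.g.\ the dual curve to $\a$ in any pants decomposition containing $\a$ and $\g$). Part (a) then bounds $\ell_{gr_{t\g}(X)}(\b)$ above, and the collar lemma gives the contradiction. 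That is the entire proof in the paper.

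Your modulus route, by contrast, has the gap you yourself flag. The step ``an embedded annulus about $\a$ that does meet $\g$ can be truncated'' is not justified, and it is not clear it can be made to work cheaply. An embedded annulus $A$ about $\a$ in $gr_{t\g}(X)$ can genuinely enter the inserted cylinder $E_t$: the boundary curves of $A$, though homotopic to $\a$, may cross $\g$ geometrically, and as $t$ grows the portion $A\cap E_t$ can be large. Cutting $A$ back to $A\setminus E_t$ need not leave an annulus at all, and there is no obvious monotonicity saying the maximal modulus about $\a$ is controlled by the fixed surface $X$. In the annular cover $\hat X_\a\to gr_{t\g}(X)$, the preimage of $E_t$ is a family of strips running out to the ends of $\hat X_\a$, widening with $t$; bounding $\operatorname{mod}(\hat X_\a)$ uniformly in $t$ is essentially equivalent to the statement you are trying to prove. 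So the modulus approach is circular unless you supply an independent geometric bound, whereas the paper's $\b$-curve argument sidesteps the issue entirely.
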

\begin{proof} Part (a) is direct application of  the previous
 proposition. For (b), first
 observe that,
 by (a), $\ell_{gr_{t\g}(X)}(\a)$ is bounded above.  Since
  $\alpha,\g$ are
disjoint, then there exists a measured lamination $\beta$
intersecting $\alpha$ but disjoint from $\g$; by (a),
$\ell_{gr_{t\g}(X)}(\beta) $ is bounded above. Suppose now that
$\ell_{gr_{t\g}(X)}(\a)\to 0$; then, since $\beta$ intersects
$\a$, it should be $\ell_{gr_{t\g}(X)}(\b)\to \infty$, arriving to
a contradiction.
\end{proof}

%
%
%
%

From this corollary we immediately find that, for $\lambda$ maximal
uniquely ergodic lamination,  the grafting ray   $\cal
R_{\lambda}(X) $ converges to $[\lambda]$. The same thing holds for
Teichm\"uller geodesics and lines of minima.   A measured lamination
is {\it maximal} if its support is not properly contained in the
support of any other measured lamination. A measured lamination is
{\it uniquely ergodic} if every measured lamination with the same
support is in the same projective class.

\begin{theorem}\label{con} Let $\lambda$ be a maximal uniquely
 ergodic measured lamination, and  $X\in \teich$. Then the
 grafting ray ${\cal R}_{\lambda}(X)$  converges to
$[\lambda]\in\mathcal{PML}(S)$ in Thurston's
compactification of $\mathcal{T}(S)$.
\end{theorem}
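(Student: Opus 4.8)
The plan is to use the characterization of convergence to Thurston's boundary in terms of hyperbolic lengths of measured laminations, together with the fact that a maximal uniquely ergodic lamination $\lambda$ is determined (up to scale) by its vanishing locus for intersection number. Concretely, I want to produce a sequence $c_n \to 0$ (along $t_n \to \infty$) such that $c_n \ell_{gr_{t_n\lambda}(X)}(\alpha) \to i(\alpha,\lambda)$ for every $\alpha \in \mathcal{ML}(S)$. The natural normalization is $c_t = 1/t$, and the target is to show $\frac{1}{t}\ell_{\cal R_t}(\alpha) \to i(\alpha,\lambda)$.

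First I would establish an upper bound. By Proposition \ref{ctm} (McMullen), $\ell_{gr_{t\lambda}(X)}(\alpha) \le \ell_X(\alpha) + t\, i(\alpha,\lambda)$, so $\limsup_{t\to\infty} \frac{1}{t}\ell_{\cal R_t}(\alpha) \le i(\alpha,\lambda)$. Next comes the lower bound, which is the main point. Fix a subsequence $t_n \to \infty$; after passing to a further subsequence I may assume $(S, \sigma_{t_n})$, where $\sigma_{t_n}$ is the hyperbolic metric of $\cal R_{t_n}$, converges in $\overline{\teich}$. The key observation is that this limit cannot be an interior point of $\teich$: by Corollary \ref{cor1}(a) applied with $\alpha = \lambda$ (since $i(\lambda,\lambda)=0$), $\ell_{\cal R_t}(\lambda) < \ell_X(\lambda)$ is bounded, so if any measured lamination $\beta$ with $i(\beta,\lambda)\neq 0$ had bounded length along the subsequence, one could not have a limit in the interior while simultaneously $\ell_{\cal R_{t_n}}(\lambda)$ stays bounded — more carefully, if the limit were interior, all lengths would converge to finite positive numbers, but then one can derive a contradiction from the fact that $gr_{t\lambda}(X)$ leaves every compact set of $\teich$ (for instance because $\ell_{\cal R_t}(\lambda)\to 0$ is false but some length must blow up — here I would instead argue directly that the limit is a projective lamination $[\nu]$). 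So the limit is $[\nu] \in \mathcal{PML}(S)$ for some measured lamination $\nu$.

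Now I claim $i(\nu,\lambda) = 0$, which by maximality and unique ergodicity of $\lambda$ forces $[\nu] = [\lambda]$. Indeed, the convergence $(S,\sigma_{t_n}) \to [\nu]$ means there are $d_n \to 0$ with $d_n \ell_{\sigma_{t_n}}(\beta) \to i(\beta,\nu)$ for all $\beta$; taking $\beta = \lambda$ and using that $\ell_{\cal R_{t_n}}(\lambda)$ is bounded while $d_n \to 0$ gives $i(\lambda,\nu) = 0$. Hence $[\nu] = [\lambda]$, and since every convergent subsequence has the same limit $[\lambda]$, the whole ray converges to $[\lambda]$ in $\overline{\teich}$. (This establishes convergence in Thurston's compactification without even needing the sharp asymptotics $\frac{1}{t}\ell_{\cal R_t}(\alpha) \to i(\alpha,\lambda)$; that refinement would follow afterward by combining the upper bound above with the now-established convergence.)

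The main obstacle is the step ruling out an interior limit and producing a genuine projective-lamination limit: one must verify that the grafting ray is not eventually contained in a compact subset of $\teich$. I expect to handle this by noting that $gr_{t\lambda}(X)$ contains an embedded Euclidean cylinder of modulus $t/\ell_X(\lambda) \to \infty$, so the extremal length (hence, by Maskit's comparison, the hyperbolic length up to bounded factors) of the core $\lambda$ tends to $0$; wait — more precisely, by the estimate in Section \ref{grafting}, $\ell_{\cal R_t}(\lambda)$ need not go to zero, so instead I rely on the standard fact that a sequence in $\teich$ with the length of a fixed filling-complement lamination staying bounded while the sequence exits every compact set must converge (subsequentially) to a point of $\mathcal{PML}(S)$, i.e.\ that $\teich$ is precompact in $\overline{\teich}$ and no interior point can be a limit of a sequence on which $\ell(\lambda)$ is bounded but which is not precompact in $\teich$. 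Concretely: if the subsequence stayed in a compact part of $\teich$, extract a limit $Y \in \teich$; then $\ell_Y(\lambda) = \lim \ell_{\cal R_{t_n}}(\lambda)$, fine, but also $\ell_Y(\beta)<\infty$ for the $\beta$ from Corollary \ref{cor1}(b)-type arguments, and one shows the marked surfaces $\cal R_{t_n}$ cannot converge in $\teich$ because the conformal modulus of the grafting cylinder is unbounded — an unbounded-modulus embedded annulus is incompatible with a convergent sequence of marked conformal structures. This modulus argument is the crux and is where I would spend the most care.
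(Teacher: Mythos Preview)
Your core argument is the paper's: pass to a subsequential limit in $\overline{\mathcal{T}(S)}$, use Corollary~\ref{cor1}(a) with $\alpha=\lambda$ to see that $\ell_{\mathcal{R}_{t_n}}(\lambda)$ stays bounded, conclude $i(\nu,\lambda)=0$ for any boundary limit $[\nu]$, and then invoke maximality and unique ergodicity to get $[\nu]=[\lambda]$. The paper does exactly this in four lines. Your opening plan to prove the sharper statement $\tfrac{1}{t}\ell_{\mathcal{R}_t}(\alpha)\to i(\alpha,\lambda)$, and the McMullen upper bound you derive toward it, are not needed for Thurston convergence, and you rightly drop them midway.

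The one place you add substance beyond the paper is the attempt to exclude an interior limit, and here your proposed argument fails. A maximal uniquely ergodic lamination has no closed leaves, so $gr_{t\lambda}(X)$ contains \emph{no} embedded Euclidean cylinder: the grafting locus is a transversely measured Euclidean lamination of total area $t\,\ell_X(\lambda)$ spread along the dense, non-closed leaves of $\lambda$, not an annulus around any simple closed curve. The modulus $t/\ell_X(\lambda)$ you invoke and the estimate $\ell_{\mathcal{R}_t}(\gamma)\le \pi\ell_X(\gamma)/t$ from Section~\ref{grafting} are meaningful only when $\lambda$ is a (multi)curve, so the clause ``an unbounded-modulus embedded annulus is incompatible with a convergent sequence of marked conformal structures'' has no purchase for the $\lambda$ in this theorem. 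The paper itself simply writes ``suppose that $gr_{t_n\lambda}(X)\to[\nu]$'' without explicitly treating the interior case; you have correctly spotted a point the paper glosses over, but the justification you offer is specific to rational $\lambda$ and does not transfer.
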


\begin{proof}

Since $\tc$ is compact, the grafting ray  has a
convergent subsequence. We will show that   the limits
of
 all the convergent subsequences of the grafting ray
  are equal to $[\lambda]$.

So, suppose that  $gr_{t_n\lambda}(X)\to[\nu]$. If
$|\nu|\not=|\lambda|$, since $\lambda $ is maximal,
then $i(\nu,\lambda)\not=0$ and  hence
$\ell_{gr_{t_n\lambda}(X)}(\lambda)$ tends to
infinity. Since this contradicts Corollary
\ref{cor1}(a) for $\a=\g=\lambda$, we must have that
$|\nu|=|\lambda|$. Since $\lambda $ is uniquely
ergodic, this implies that $[\nu]=[\lambda]$.
\end{proof}

Next, we are going to  obtain a better estimate of the lengths of
the grafting curves, when we do grafting along a simple closed
curve or a system of disjoint closed curves $\gamma_i$. In
particular, we will see that  along the grafting ray determined by
$\sum c_i\g_i$, the length of  each $\g_i$ tends to zero in the
order of $1/t$.

\begin{proposition}\label{lengthbound}
Let $\{\gamma_1,\cdots,\gamma_k\}$ be a system of
disjoint simple closed curves and let
$\lambda=\sum_jc_j\g_j$, with $c_j> 0$. Consider
$X\in\teich$. Then, for each $j=1,\dots, k$ we have
$$\frac{2\theta_0}{2\theta_0+ c t}\ell_X(\gamma_j)\leq
\ell_{gr_{t\lambda}(X)}(\gamma_j) \leq\frac{\pi}{\pi+ c_jt}
\ell_X(\gamma_j),$$ where $c=\max(c_1,\cdots,c_k)$ and $\theta_0$ is
some positive fixed number  depending only on $X$ (smaller than
$\pi/2$).


\end{proposition}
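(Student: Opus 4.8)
Both estimates will come from a two–sided comparison between the hyperbolic metric of $gr_{t\lambda}(X)$ and that of a round annulus around $\gamma_j$. The convenient object is the annular cover $N_j$ of $gr_{t\lambda}(X)$ associated with $\langle\gamma_j\rangle$: it is conformally a round annulus with ${\rm mod}(N_j)=\pi/\ell_{gr_{t\lambda}(X)}(\gamma_j)$, so it suffices to bound ${\rm mod}(N_j)$ from both sides. Since grafting is a local operation it commutes with passing to covers, so $N_j$ is the annular cover $\widehat X_j$ of $X$ — a complete hyperbolic cylinder with core of length $\ell_X(\gamma_j)$, hence ${\rm mod}(\widehat X_j)=\pi/\ell_X(\gamma_j)$ — grafted along the full preimage of $\gamma_1\cup\dots\cup\gamma_k$. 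Because each $\gamma_i$ is disjoint from $\gamma_j$, every lift appearing here other than the core lift of $\gamma_j$ is a line disjoint from the core and contained in one of the two funnels of $\widehat X_j$; moreover a geodesic disjoint from $\gamma_j$ misses the collar–lemma collar of $\gamma_j$, so those lines also avoid the core–ward part of each funnel. This is what will let us separate the ``central'' contribution to ${\rm mod}(N_j)$ from the grafting that happens near the ends.

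For the upper bound on $\ell_{gr_{t\lambda}(X)}(\gamma_j)$ I would discard, inside $N_j$, all the grafting except that along the core lift of $\gamma_j$. What remains is a sub-annulus of $N_j$ with the same core, namely $\widehat X_j$ grafted along its core with weight $tc_j$; this is a hyperbolic-plus-Euclidean cylinder of modulus $\pi/\ell_X(\gamma_j)+tc_j/\ell_X(\gamma_j)$. By monotonicity of the modulus under inclusion, ${\rm mod}(N_j)\ge(\pi+tc_j)/\ell_X(\gamma_j)$, which is exactly $\ell_{gr_{t\lambda}(X)}(\gamma_j)\le \pi\,\ell_X(\gamma_j)/(\pi+tc_j)$. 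Equivalently, one may map the above sub-annulus holomorphically into $gr_{t\lambda}(X)$ and use that holomorphic maps do not increase the hyperbolic metric, together with $\ell_{A_r}(\gamma)=\pi/{\rm mod}(A_r)$.

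For the lower bound I would argue in the opposite direction. First, by Proposition~\ref{ctm} one already knows $\ell:=\ell_{gr_{t\lambda}(X)}(\gamma_j)\le \ell_X(\gamma_j)$, so — this is the role of $\theta_0$ — the geodesic $\gamma_j$ in $gr_{t\lambda}(X)$ carries an embedded collar whose modulus is at least $2\theta_0/\ell$, where $\theta_0=\min_i\arctan\bigl(1/\sinh(\ell_X(\gamma_i)/2)\bigr)<\pi/2$ is the collar half-angle determined by the longest of the $\gamma_i$ in $X$. On the other hand this collar must physically fit inside $gr_{t\lambda}(X)$: the part of it lying in the Euclidean cylinder inserted along $\gamma_j$ has modulus at most $tc_j/\ell_X(\gamma_j)\le ct/\ell_X(\gamma_j)$, and the part lying in the hyperbolic piece $X\smallsetminus\bigcup_i\gamma_i$ is an embedded boundary collar of $\gamma_j$ there, whose modulus is bounded by a constant depending only on $X$ — here one uses both that the other $\gamma_i$ are disjoint from $\gamma_j$ and that, by Corollary~\ref{cor1}(b), grafting along $\lambda-c_j\gamma_j$ alone keeps $\ell(\gamma_j)$ bounded away from zero, so no unbounded room opens up around $\gamma_j$ from grafting the other curves. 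Putting the two facts together bounds $2\theta_0/\ell$ from above by (the flat contribution) $+$ (an $X$-constant), and hence gives a lower bound for $\ell$ of the stated shape $2\theta_0\,\ell_X(\gamma_j)/(2\theta_0+ct)$. The delicate point — and the one I expect to be the main obstacle — is precisely this last estimate: controlling, uniformly in $t$, how much the grafting performed along the $\gamma_i$ (and, in the $N_j$ picture, along infinitely many of their lifts in the funnels) can enlarge a collar of $\gamma_j$ beyond the inserted cylinder, and checking that the resulting constant can indeed be packaged into a single $\theta_0<\pi/2$ as in the statement.
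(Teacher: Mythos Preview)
Your upper bound is the paper's argument recast in annular-cover language: both produce a holomorphic map from an annulus of modulus $(\pi+tc_j)/\ell_X(\gamma_j)$ into $gr_{t\lambda}(X)$ carrying the core to $\gamma_j$ and then invoke the Schwarz lemma. The paper writes this map explicitly as $z\mapsto z^{(\pi+tc_j)/\pi}$ on the universal cover and then remarks that inserting further sectors for the other $\gamma_i$ can only make the Kobayashi metric smaller; your $N_j'$ is the same object one level down. So on this half your proposal and the paper agree.

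For the lower bound your route is genuinely different, and the gap you flag is real and not easily repaired. Two things go wrong. First, modulus is \emph{super}additive for a decomposition of an annulus into parallel sub-annuli (Gr\"otzsch), so splitting the hyperbolic collar $C$ into ``part in $E_j$'' plus ``part in the hyperbolic piece'' gives an inequality in the wrong direction. Second, and more fundamentally, $C$ is the collar in the \emph{new} hyperbolic metric of $gr_{t\lambda}(X)$, which is everywhere smaller than the Thurston metric; there is no a-priori reason for $C$ to be contained in $E_j$ together with any fixed $X$-neighbourhood of $\gamma_j$, so the ``hyperbolic part'' of $C$ need not be an embedded boundary collar in $X\smallsetminus\bigcup_i\gamma_i$ of bounded modulus. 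The appeal to Corollary~\ref{cor1}(b) does not help: it bounds $\ell(\gamma_j)$ when you graft along the \emph{other} $\gamma_i$ only, and says nothing about how far the collar spreads once you also graft along $\gamma_j$ itself. In the annular-cover picture the same obstacle reappears as the need to bound, uniformly in $t$, the modulus gained by grafting along infinitely many lifts in the funnels of $\hat X_j$ --- and you provide no mechanism for that.

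The paper bypasses this entirely by taking a different route for the lower bound. It writes down an explicit quasiconformal homeomorphism $\phi\colon X\to gr_{t\lambda}(X)$ which is the identity outside fixed $\epsilon_0$-collars $N_i$ of the $\gamma_i$ in $X$, and on each $N_i$ is the angular stretch
\[
re^{i\theta}\ \longmapsto\ r\,e^{\,i\bigl(\tfrac{2\theta_0+c_it}{2\theta_0}\,\theta-\tfrac{c_it}{2}\bigr)},
\]
of dilatation $(2\theta_0+c_it)/(2\theta_0)$; here $\theta_0$ is simply the half-angle of the $\epsilon_0$-collar in $X$ (not a collar-lemma constant as you guessed). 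The global dilatation is therefore $(2\theta_0+ct)/(2\theta_0)$ with $c=\max_ic_i$, and Wolpert's inequality $e^{-2d_{\teich}}\le \ell_{X'}(\gamma)/\ell_X(\gamma)\le e^{2d_{\teich}}$ then gives the stated lower bound in one line. This argument never looks at collars or annuli in the grafted metric, which is exactly why it avoids your obstacle.
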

\begin{proof}
%
%

To prove  the right side
 inequality, assume first  that $\lambda$ is equal to a single simple
 closed curve $\g$. To compare the length of $\gamma$ in the surfaces
 $X$ and $X_t=gr_{t\g}(X)$, we look at the universal covers $\tilde
 X, \tilde X_t$ of these surfaces. The surface $\tilde X_t$ is the
 underlying conformal structure of the universal cover of ${Gr_{t\g}(X)}$, and this
 projective surface is built from $\H^2$ by inserting ``sectors"
 $\tilde A_j$ at any component $\tilde \g_j$ of the lift  of $\g$
 (when $\tilde \g_j$ is not a vertical line, $\tilde A_j$ is a region
 bounded by $\tilde \g_j$ and an equidistant line from $\tilde \g_j$, giving the effect that
 we get a bubble, see Figure 1). If $t$ is large, the universal cover $\tilde Gr_{t\g}(X)$ should be understood as
 multi-sheeted. For both universal covers, the map $z\mapsto
 e^{\ell}z$ is a covering transformation, where we are using the
 notation $\ell=\ell_X(\g)$.
      \begin{figure}[t]
    \vspace*{-3cm}
    \hspace*{-4.5cm}
    \psfig{file=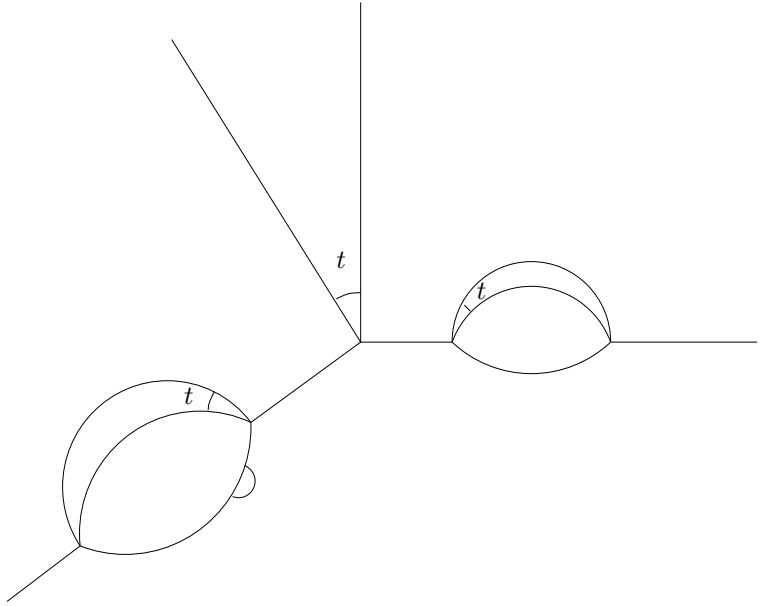}
    \vspace{-7in}
\caption{ Universal cover of  $Gr_{t\gamma}(X)$ when $t$ is small}
\end{figure}


 Consider the holomorphic map $\varphi(z)= z^{{\frac{\pi+t}{\pi}}}$
 from $\H^2$ to $\tilde Gr_{t\g}(X)$. Let $L$ be the vertical segment
 of $\H^2$ with endpoints $i, e^{\frac{\ell\pi}{\pi+t}}i$. Its image
 $\varphi(L)$ projects onto a closed curve homotopic to $\g$ in
 $X_t$. Therefore,
 $$
 \ell_{X_t}(\g)\leq \ell_{\tilde X_t}(\varphi(L))\leq
 \ell_{\H^2}(L)={\frac{\ell\pi}{\pi+t}}={\frac{\pi}
 {\pi+t}}\ell_X(\g),
 $$
  where the second inequality comes from the definition
  of the Kobayashi metric on $Gr_{t\g}(X)$. Then, we
  have the desired inequality.
 For the general case,  $\lambda=\sum_jc_j\g_j$, note that doing more
 graftings along the other curves means that one has to insert more
 sectors to build $\tilde Gr_{t\g}(X)$, and then the hyperbolic
 length on this domain is smaller than the one obtained in the previous
 computation by the definition of Kobayashi metric.


For the left side  inequality, we use the following general formula
that compares the  length of a closed curve in two different
hyperbolic surfaces in terms of the Teichm\"uller distance of these
surfaces (see \cite{Wolpert}). Let $X,X'\in \teich$ with
Teichm\"uller distance less than $C>0$. Then, for any closed curve
$\g$, we have that
\begin{equation}\label{otal}
 e^{-2C}\ell_{X'}(\g)\leq
\ell_X(\g)\leq e^{2C}\ell_{X'}(\g).
\end{equation}

To apply this formula in our case, we construct the following
quasiconformal map $\phi\colon X\to X_t=gr_{t\lambda}(X)$. Take
$\epsilon_0>0$ sufficiently small so that the
$\epsilon_0$-neighborhood of $\g_i$ is embedded on $X$. Denote by
$N_i$ this $\epsilon_0$-neighborhood, which is made up of two
annuli $A_{i,1},A_{i,2}$ of height $\epsilon_0$ glued at $\g_i$.
Take similar annuli $N_{i,t}$ in $X_t$, namely, $N_{i,t}$ is
obtained from  a  Euclidean cylinder of height $tc_i$ with
circumference $\ell_X(\gamma_i)$ by attaching $A_{i,1}$ and
$A_{i,2}$ at each side. Fundamental domains of $N_i,N_{i,t}$ are
conformally equivalent to the sets
$$\tilde N_i=\{re^{i\theta}\in \bc : r\in
[1,e^{\ell_{X}(\g)}],\theta\in[-\theta_0,\theta_0]\}$$
$$
\tilde N_{i,t}=\{re^{i\theta}\in \bc : r\in
[1,e^{\ell_{X}(\g)}],\theta\in[-\theta_0- c_i t,\theta_0]\},
$$
where $\theta_0$ is a concrete value in terms of $\epsilon_0$
 and,
for large $t$, $\tilde N_{i,t}$ should be understood
as multi-sheeted. Then the map $r\,{\rm
exp}(i\theta)\mapsto $ \break $  r\,{\rm exp}
(i(\frac{2\theta_0+ c_i t}{2\theta_0}\,\theta -\frac{
c_i t}{2})) $ is a quasiconformal homeomorphism from $
\tilde N_i$ to $\tilde N_{i,t}$ with maximal
dilatation $\frac{2\theta_0+c_it}{2\theta_0}$, and
that glues well to give a quasiconformal homeomorphism
$\phi_i$ between the quotient annuli. Extending the
homeomorphisms $\phi_i$ to $X-(\cup_i N_i)$ by the
identity map, we obtain a quasiconformal homeomorphism
$\phi$ with maximal dilatation
$\frac{2\theta_0+ct}{2\theta_0}$, where $c={\rm
max}\{c_1,\dots, c_k\}$.
Therefore the Teichm\"uller distance   between $X, X_t$ is smaller
than ${\frac{1}{2}}{\rm log} {\frac{2\theta_0+ct}{2\theta_0}}$, and
by using formula (\ref{otal}), we get the desired inequality.
\end{proof}

{\bf Remark 1}. In \cite{mcm}, McMullen sketched the proof of the
right side inequality of the above proposition by comparing the
moduli of the respective covers of $X$ and $gr_{t\g}(X)$
corresponding   to the subgroup of the fundamental group generated
by $\g$.

{\bf 2}. For the left side inequality, one can use some
inequalites derived in \cite{Ta}, but not as sharp as in this
proposition. When $\lambda$ is a weighted simple closed curve, an
asymptotic length estimate is given in \cite{dw}.

\medskip
Next, we bound  the twisting numbers of some curves
around the  curves $\g_j$ along which we do grafting.
We will do it for some special curves associated to a
pants decomposition, called dual curves.
  If
$\g_1,\dots, \g_N$ is a pants decomposition, a curve
$\delta_j$ is {\it dual} to $\g_j$ if
$i(\delta_j,\g_h)=0$ for all $h\not=j$ and
$i(\delta_j,\g_j)$ is $1$ or $2$ depending on whether
the  two pants of the decomposition  glued along
$\g_j$ are equal or different, respectively.

\begin{proposition}\label{twists} Let
$\{\g_1,\dots,\g_k\}$ be a set of disjoint simple closed curves and
$\lambda=c_1\g_1+\dots+c_k\g_k$. We add curves $\g_{k+1},\dots,\g_N$
to obtain a pants decomposition system, and let $\delta_j$ be dual
curves to  $\g_j$. Then, for each $j=1,\dots,N$ we have that $
Tw_{gr_{t\lambda}(X)}(\delta_j,\g_j) \ell_{gr_{t\lambda}(X)}(\g_j)$
is bounded above as $t\to \infty$.
\end{proposition}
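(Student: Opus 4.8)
The plan is to treat separately the curves $\g_j$ along which we graft (i.e. $j\le k$) and the complementary curves $\g_{k+1},\dots,\g_N$, although the underlying principle is the same: estimate the twisting number by producing a curve of controlled length that crosses $\g_j$ and whose twisting around $\g_j$ differs from that of $\delta_j$ by a bounded amount, then invoke the length estimate of Proposition~\ref{brokenarc}. First I would record the basic fact, coming from the construction of $gr_{t\lambda}(X)$ in \S\ref{grafting}, that grafting along $\lambda=\sum c_i\g_i$ inserts flat cylinders \emph{with no twist} along the $\g_i$; consequently the Fenchel--Nielsen twist coordinates $t_{\g_j}$ are essentially unchanged, and more to the point, the geometry of $X$ outside small collar neighborhoods of the $\g_i$ is isometric to that of $gr_{t\lambda}(X)$ outside the inserted cylinders (together with thin collars). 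This already handles $j\ge k+1$: such a $\g_j$ is disjoint from $\lambda$, so by Corollary~\ref{cor1}(a) its length stays bounded above along the ray, and by Corollary~\ref{cor1}(b) it stays bounded below; likewise a curve $\beta$ dual-type to $\g_j$ but disjoint from $\lambda$ has length bounded above by Corollary~\ref{cor1}(a). Then Corollary~\ref{easytwist} gives that $tw_{gr_{t\lambda}(X)}(\beta,\g_j)$ is bounded, and since $Tw_{\sigma}(\delta_j,\g_j)-Tw_{\sigma}(\beta,\g_j)$ is bounded independently of $\sigma$ (the remark of Minsky in \S\ref{twisting}), while $\ell_{gr_{t\lambda}(X)}(\g_j)$ is bounded, the product $Tw(\delta_j,\g_j)\,\ell(\g_j)$ is bounded.

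The remaining, and main, case is $j\le k$, where $\ell_{gr_{t\lambda}(X)}(\g_j)\sim 1/t\to 0$ by Proposition~\ref{lengthbound}, so one genuinely needs $Tw_{gr_{t\lambda}(X)}(\delta_j,\g_j)$ to grow no faster than $t$. Here I would use the explicit quasiconformal map $\phi\colon X\to X_t=gr_{t\lambda}(X)$ constructed in the proof of Proposition~\ref{lengthbound}, which is the identity outside the collars $N_i$ and, inside $\tilde N_i$, is the map $r\,e^{i\theta}\mapsto r\,\exp\!\big(i(\tfrac{2\theta_0+c_it}{2\theta_0}\theta-\tfrac{c_it}{2})\big)$. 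The point is that $\phi$ distorts the winding of an arc crossing the collar of $\g_i$ by a definite amount: in passing through $N_i$ the image arc picks up an extra angular shift of at most order $c_it$, which translates (after dividing by $\ell_{X_t}(\g_i)\sim 1/t$) into a contribution of order $t^2$ to the twisting number — this is the wrong direction, so the naive quasiconformal estimate is not enough. Instead I would estimate $Tw_{X_t}(\delta_j,\g_j)$ directly using Proposition~\ref{brokenarc}: take a pants decomposition adapted so that $\delta_j$ is a fixed simple closed curve, apply Proposition~\ref{brokenarc} to $\alpha=\delta_j$ on the surface $X_t$ (whose $\g_i$ all have length bounded above by Proposition~\ref{lengthbound}, indeed $\to 0$), and compare with the same estimate on $X$. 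Since $\ell_{X_t}(\delta_j)\le \ell_X(\delta_j)+t\,i(\delta_j,\lambda)=O(t)$ by McMullen's Proposition~\ref{ctm}, and the only term in the Proposition~\ref{brokenarc} estimate for $\ell_{X_t}(\delta_j)$ that can be large and positive is $\sum_h i(\delta_j,\g_h)[2\log\frac1{\ell_{X_t}(\g_h)}+Tw_{X_t}(\delta_j,\g_h)\ell_{X_t}(\g_h)]$, and each $2\log\frac1{\ell_{X_t}(\g_h)}$ is only $O(\log t)$, we conclude $\sum_h i(\delta_j,\g_h)Tw_{X_t}(\delta_j,\g_h)\ell_{X_t}(\g_h)=O(t)$. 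Because all these terms are nonnegative (absolute values) and $i(\delta_j,\g_j)\ge 1$, each individual $Tw_{X_t}(\delta_j,\g_h)\ell_{X_t}(\g_h)$ is $O(t)=O(1)$ after I note that actually the sharper bound is wanted — so I must also bound below, using $\ell_{X_t}(\delta_j)\ge$ (some positive constant) to control the negative direction, which is automatic.

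The step I expect to be the main obstacle is making the comparison in the previous paragraph yield a bound that is \emph{uniform in $t$} rather than merely $O(t)$: the statement asserts $Tw_{gr_{t\lambda}(X)}(\delta_j,\g_j)\,\ell_{gr_{t\lambda}(X)}(\g_j)$ is bounded, i.e. $O(1)$, and since $\ell_{X_t}(\g_j)\sim 1/t$ this needs $Tw_{X_t}(\delta_j,\g_j)=O(t)$, which is exactly what the broken-arc comparison gives once one checks that the $O(\log t)$ error terms and the $O(t)$ bound on $\ell_{X_t}(\delta_j)$ are the only sources of growth. Concretely, the delicate point is that the additive constant $C$ in Proposition~\ref{brokenarc} depends on the curve $\delta_j$ and on the upper bound $M$ for the $\ell(\g_i)$ — both fixed here — but \emph{not} on $t$, so the comparison is legitimate; and one must verify that the ``no twist'' nature of the grafting construction really does prevent $\delta_j$ from accumulating extra winding beyond what the cylinder insertion forces, which is where the explicit picture of the universal cover (Figure~1) and the covering transformation $z\mapsto e^{\ell}z$ being common to $\tilde X$ and $\tilde X_t$ is used. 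Assembling these: for $j\le k$, $Tw_{X_t}(\delta_j,\g_j)\,\ell_{X_t}(\g_j)\le i(\delta_j,\g_j)^{-1}\big(\ell_{X_t}(\delta_j)+C+\textstyle\sum_h 2\log\tfrac1{\ell_{X_t}(\g_h)}\big)\cdot$ wait — rather, it is bounded by the displayed bracketed sum which is $\le \ell_{X_t}(\delta_j)+C = O(t)\cdot\ell_{X_t}(\g_j)=O(1)$; combined with the $j\ge k+1$ case above, this proves the proposition.
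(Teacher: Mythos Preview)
Your treatment of the case $j\ge k+1$ is correct and matches the paper: both use Corollary~\ref{cor1} to bound $\ell_{X_t}(\g_j)$ above and below and then invoke Corollary~\ref{easytwist}.

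For $j\le k$, however, there is a genuine gap. Your upper bound on $\ell_{X_t}(\delta_j)$ comes from McMullen's inequality (Proposition~\ref{ctm}), which gives
\[
\ell_{X_t}(\delta_j)\le \ell_X(\delta_j)+t\,i(\delta_j,\lambda)=O(t).
\]
Plugging this into Proposition~\ref{brokenarc} (and using that $\delta_j$ meets only $\g_j$ among the pants curves) yields
\[
i(\delta_j,\g_j)\Big[2\log\tfrac{1}{\ell_{X_t}(\g_j)}+Tw_{X_t}(\delta_j,\g_j)\,\ell_{X_t}(\g_j)\Big]=O(t),
\]
and since the log term is only $O(\log t)$ you obtain $Tw_{X_t}(\delta_j,\g_j)\,\ell_{X_t}(\g_j)=O(t)$, \emph{not} $O(1)$. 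Your final line ``$\ell_{X_t}(\delta_j)+C=O(t)\cdot\ell_{X_t}(\g_j)=O(1)$'' is a slip: $\ell_{X_t}(\delta_j)+C$ is genuinely of order $t$, not of order $t\cdot\ell_{X_t}(\g_j)$. Equivalently, from $Tw\cdot\ell=O(t)$ and $\ell\sim 1/t$ you only get $Tw=O(t^2)$, one order worse than the $O(t)$ you need.

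The missing idea is a \emph{sharper} upper bound: $\ell_{X_t}(\delta_j)\le 4\log t+O(1)$. McMullen's bound measures the Thurston (projective) length of a representative crossing the inserted flat cylinder, and that flat arc has Euclidean length $\sim c_jt$; but its \emph{hyperbolic} (Kobayashi) length on $X_t$ is only $\sim 2\log t$, because one can embed a round annulus $A_r$ of modulus $c_jt/\ell_X(\g_j)$ holomorphically into the cylinder and compute the hyperbolic length of the crossing arc there. The paper does exactly this: it represents $\delta_j$ by arcs $a_1,a_2$ of $X$ together with two horizontal arcs $L_t,L'_t$ across the cylinder, bounds $\ell_{X_t}(a_i)\le\ell_X(a_i)$, and shows $\ell_{X_t}(L_t)\le 2\log t+O(1)$ via the annulus embedding. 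This $4\log t$ cancels the $4\log\tfrac{1}{\ell_{X_t}(\g_j)}\sim 4\log t$ term in the broken-arc lower bound, leaving $Tw_{X_t}(\delta_j,\g_j)\,\ell_{X_t}(\g_j)=O(1)$. Without this hyperbolic-versus-flat comparison, the argument cannot close.
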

\begin{proof}
If $j=k+1,\dots, N$, the result follows from Corollary
\ref{easytwist}, since curves not intersecting $\g_1,\dots,\g_k$
have length bounded above and below away from zero along the
grafting ray by Corollary \ref{cor1}.

Consider then $j=1,\dots,k$, and assume  that $i(\delta_j,\g_j)=2$,
that is, the two pairs of pants $P_1,P_2$ glued along $\g_j$ are
different (the other case is similar).  Let $X_t=gr_{t\lambda}(X)$.
Since $\ell_{X_t}(\g_j) $ is bounded above for all $j$, we can apply
Proposition \ref{brokenarc}; then, there exists a constant $C$
independent of $t$ such that
\begin{equation} \label{bigger}
\ell_{X_t}(\delta_j)>4 \log \frac{1}{\ell_{X_t}(\g_j)}+ 2
Tw_{X_t}(\delta_j,\g_j) \ell_{X_t}(\g_j) -C
\end{equation}


 To estimate the length of $\delta_j$
from above, recall that  $X_t$ is a uniformization of the
projective surface constructed by cutting $X$ along $\g_j$ and
inserting a Euclidean cylinder $E_j$; then the geodesic $\delta_j$
of $X$ is splitted into two arcs $a_1,a_2$; we join these arcs by
two horizontal arcs $L_t,L'_t$ of the inserted cylinder, to get
the curve $a_1\cup L_t\cup a_2\cup L'_t$, that represents
$\delta_j$ in $X_t$ (see Figure 2). Then, the hyperbolic length of
the geodesic representing $\delta_j$ satisfies
$$
\ell_{X_t}(\delta_j) \leq
\ell_{X_t}(a_1)+\ell_{X_t}(a_2)+\ell_{X_t}(L_t)+\ell_{X_t}(L'_t).
$$
      \begin{figure}[t]
    \vspace*{-3cm}
    \hspace*{-4.5cm}
    \psfig{file=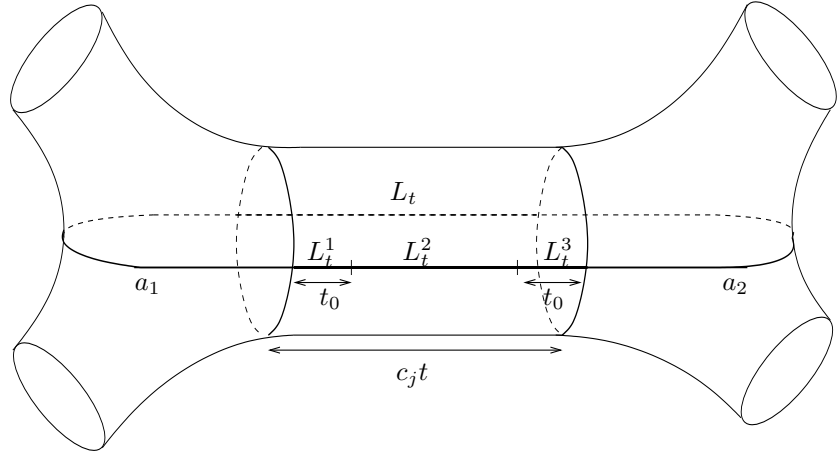}
    \vspace{-7.7in}
\caption{Computing twists }
\end{figure}

Now, $\ell_{X_t}(a_i)<\ell_X(a_i)$ because the hyperbolic metric in
$X_t$ is smaller than the projective metric, and the latter
coincides with the hyperbolic metric of $X$ on the arcs $a_1,a_2$.

It is left to estimate $\ell_{X_t}(L_t)$ when  $t\to \infty$. In
order to do this, take $t_0$ some fixed   positive number
(independent of $t$) and divide the arc $L_t$ as union of three
segments $L_t^1\cup L_t^2\cup L_t^3$, where both $L_t^1$ and $L_t^3$
have Euclidean length $t_0$. Since the  Kobayashi metric is smaller
than the projective metric, and the latter  coincides with  the
Euclidean metric on the inserted annuli,  we have that
$\ell_{X_t}(L_t^i)<t_0$, for $i=1,3$.

We finally estimate $\ell_{X_t}(L_t^2)$.    As in
section \ref{grafting}, consider a holomorphic
embedding $g\colon A_r\to E_j$, where $r$ is such that
${\rm mod}(A_r)={\rm mod}(E_j)$; thus $r=e^{-\frac{2\pi c_jt}{\ell}}$, where
$\ell=\ell_X(\g_j)$. Taking $E_j$ as the
rectangle $[0,c_jt]\times [0,\ell]$
 with the horizontal sides being identified, then the map $g$ has inverse
  $g^{-1}(z)={\rm exp}
  (\frac{-2\pi}{\ell }z)$. On the other hand, the uniformizing map for $A_r$ is
  $\Phi\colon \H^2\to A_r$ defined as $\Phi(z)=z^{-\frac{{\rm log} r}{\pi}i}$.
  Then, we can see that $g^{-1}(L_t^2)$ is a geodesic arc on the
  hyperbolic metric of $A_r$, whose length is equal to the hyperbolic length of one
   of the connected components of  $\Phi^{-1}(g^{-1}(L^2_t))$
   (see Figure 3).

   \begin{figure}[t]
    \vspace*{-3cm}
    \hspace*{-7cm}
    \psfig{file=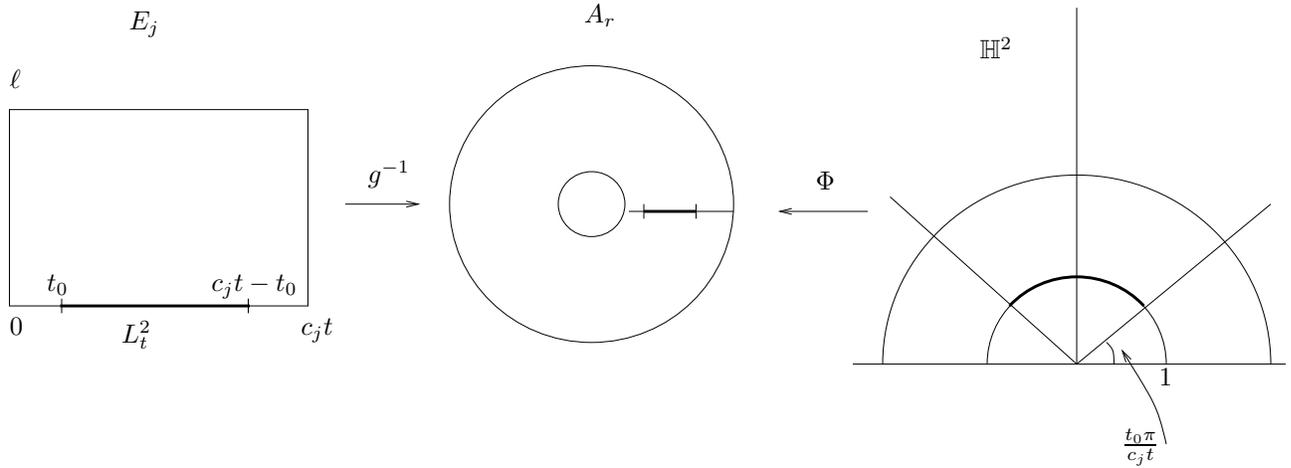}
    \vspace{-7.7in}
\caption{Computing twists (ii) }
\end{figure}

   We compute this length by elementary hyperbolic geometry and, using that $g$
   is distance decreasing, we get
  $$
\ell_{X_t}(L_t^2)\leq \ell_{A_r}(g^{-1}(L_t^2) )=
 2\log\frac{{\rm cos}\,
\frac{t_0\pi}{2c_jt}}{{\rm sin}\, \frac{t_0\pi}{2c_jt}} .
  $$
Finally, notice that
$$\frac{{\rm cos}\, \frac{t_0\pi}{2c_jt}}{{\rm sin}\,
\frac{t_0\pi}{2c_jt}}\sim t \; {\rm as} \;t\to \infty
,$$
so that we have that
  $\ell_{X_t}(L_t^2)\leq  2\log (t) +O(1)$.
   The same estimate holds for $L'_t$. Thus, putting all these together,
   we  have
  \begin{equation}\label{smaller}
\ell_{X_t}(\delta_j) \leq
 4\log(t)+O(1).
 \end{equation}

 Combining (\ref{bigger}) and (\ref{smaller}), taking into account that
 $\frac{1}{\ell_{X_t}(\g_j)}\sim t$ (by Proposition \ref{lengthbound}),
   and
 cancelling out the terms
 $\log (t)$, we get that the term
 $Tw_{X_t}(\delta_j,\g_j)\ell_{X_t}(\g_j)$ is
 bounded as $t\to \infty$.
\end{proof}


\bigskip

Using the previous results we can now study the convergence  to
the Thurston's boundary of the grafting rays determined   by
rational laminations.

\begin{theorem}
\label{limit} Let $\{\gamma_1,\cdots,\gamma_k\}$ be a
system of disjoint simple closed  curves, let
  $\lambda=\sum c_i\gamma_i$ be
a weighted system, and consider any $X\in\teich$.
Then
$$gr_{t\lambda}(X)\ra [\sum\gamma_i]$$ in Thurston boundary of
Teichm\"uller space.
\end{theorem}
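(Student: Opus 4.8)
The plan is to mimic the argument of Theorem \ref{con}, but now using the Thurston-boundary characterization via lengths of curves rather than the unique-ergodicity trick (which is unavailable here since $\sum\gamma_i$ is rational, hence far from uniquely ergodic). By compactness of $\tc$ it suffices to show that every convergent subsequence $gr_{t_n\lambda}(X)\to[\nu]$ has $[\nu]=[\sum\gamma_i]$. So fix such a subsequence and the scaling constants $c_n\to 0$ with $c_n\ell_{\sigma_n}(\alpha)\to i(\alpha,\nu)$ for all $\alpha$, where $\sigma_n$ denotes $gr_{t_n\lambda}(X)$.

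First I would show $i(\nu,\gamma_j)=0$ for each $j=1,\dots,k$. By Proposition \ref{lengthbound}, $\ell_{\sigma_n}(\gamma_j)\sim 1/t_n\to 0$, so $c_n\ell_{\sigma_n}(\gamma_j)\to 0$, which forces $i(\nu,\gamma_j)=0$; hence the support of $\nu$ is disjoint from every $\gamma_j$, i.e.\ $\nu$ is carried by $S_{\mathcal A}$ with $\mathcal A=\{\gamma_1,\dots,\gamma_k\}$. Next, I would show $\nu$ is in fact supported on $\bigcup\gamma_j$, equivalently that $i(\nu,\delta)=0$ for every simple closed curve $\delta$ disjoint from all $\gamma_j$. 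For such $\delta$, Corollary \ref{cor1}(a) gives $\ell_{\sigma_n}(\delta)\le\ell_X(\delta)$ bounded, so $c_n\ell_{\sigma_n}(\delta)\to 0$ and $i(\nu,\delta)=0$. Since any measured lamination meeting $S_{\mathcal A}$ essentially would intersect some such $\delta$, this shows the support of $\nu$ is contained in $\bigcup_j\gamma_j$; thus $\nu=\sum_j b_j\gamma_j$ for some $b_j\ge 0$, and $[\nu]=[\sum_{j\,:\,b_j>0}\gamma_j]$.

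It remains to identify the weights, i.e.\ to show $b_j>0$ for every $j$ (so that $[\nu]=[\sum_{j=1}^k\gamma_j]$). Fix $j$ and apply the defining convergence to a dual curve $\delta_j$: $c_n\ell_{\sigma_n}(\delta_j)\to i(\delta_j,\nu)=b_j\, i(\delta_j,\gamma_j)$, which is a positive multiple of $b_j$. So I must show $c_n\ell_{\sigma_n}(\delta_j)$ does \emph{not} tend to $0$. Here is where the earlier estimates do the work: by Proposition \ref{brokenarc} applied in $\sigma_n$ (legitimate since all $\ell_{\sigma_n}(\gamma_h)$ are bounded above),
$$
\ell_{\sigma_n}(\delta_j)\ \ge\ 2\,i(\delta_j,\gamma_j)\log\frac{1}{\ell_{\sigma_n}(\gamma_j)}+i(\delta_j,\gamma_j)\,Tw_{\sigma_n}(\delta_j,\gamma_j)\ell_{\sigma_n}(\gamma_j)-C,
$$
and since $\ell_{\sigma_n}(\gamma_j)\sim 1/t_n$ the dominant term $2\,i(\delta_j,\gamma_j)\log t_n$ forces $\ell_{\sigma_n}(\delta_j)\gtrsim\log t_n\to\infty$. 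To turn a lower bound on $\ell_{\sigma_n}(\delta_j)$ into a lower bound on $c_n\ell_{\sigma_n}(\delta_j)$, I need a \emph{lower} bound on $c_n\log t_n$, i.e.\ a curve whose length grows no faster than $\log t_n$ so that the renormalizing constant cannot decay faster than $1/\log t_n$. For this I would exploit that $\nu\ne 0$: pick any curve $\alpha_0$ with $i(\alpha_0,\nu)>0$ (possible since $\nu$ is a nonzero lamination, once we know $\nu\neq 0$), and compare $\ell_{\sigma_n}(\delta_j)$ with $\ell_{\sigma_n}(\alpha_0)$. Both are governed by Proposition \ref{brokenarc}; the $\log(1/\ell_{\sigma_n}(\gamma_h))\sim\log t_n$ terms and, crucially, the twisting terms $Tw_{\sigma_n}(\cdot,\gamma_h)\ell_{\sigma_n}(\gamma_h)$, which are bounded by Proposition \ref{twists} for $\delta_j$ (and for $\alpha_0$ one reduces to the $\delta_h$'s up to a surface-independent error, as in the Remark after the Fenchel--Nielsen subsection), all contribute at the same order $\log t_n$. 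Hence $\ell_{\sigma_n}(\delta_j)\asymp\log t_n\asymp\ell_{\sigma_n}(\alpha_0)$, so $c_n\ell_{\sigma_n}(\delta_j)$ and $c_n\ell_{\sigma_n}(\alpha_0)$ have the same order; since the latter converges to $i(\alpha_0,\nu)>0$, we get $b_j>0$.

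The main obstacle is precisely this last step: showing $b_j>0$ for \emph{every} $j\le k$, which amounts to proving that no $\gamma_j$ is "forgotten" in the limit — equivalently that $\ell_{\sigma_n}(\delta_j)$ grows at the same rate (namely $\log t_n$, uniformly in $j$) as the generic curve's length. This is exactly where Propositions \ref{lengthbound} and \ref{twists} are indispensable: \ref{lengthbound} pins $\log(1/\ell_{\sigma_n}(\gamma_j))$ to $\log t_n + O(1)$ for \emph{all} $j$ simultaneously (same leading coefficient), and \ref{twists} guarantees the twisting contributions do not overwhelm this. Once these order estimates are in hand, the renormalization constant is forced to behave like $1/\log t_n$ and every intersection number $i(\delta_j,\nu)$ is genuinely positive, completing the identification $[\nu]=[\sum\gamma_i]$.
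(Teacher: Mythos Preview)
Your proposal is correct and relies on exactly the same estimates as the paper (Corollary~\ref{cor1}, Propositions~\ref{brokenarc}, \ref{lengthbound}, \ref{twists}), but the organization differs. The paper bypasses the subsequence/support-identification detour entirely: it simply applies Proposition~\ref{brokenarc} to an arbitrary curve $\beta$, uses Proposition~\ref{twists} (together with the Remark after the Fenchel--Nielsen subsection) to kill all twisting terms, uses Corollary~\ref{cor1}(b) to kill the $j>k$ terms, and Proposition~\ref{lengthbound} to replace each $\log(1/\ell_{X_t}(\gamma_j))$ by $\log t + O(1)$, obtaining
\[
\ell_{X_t}(\beta)=2\Big(\textstyle\sum_{j=1}^k i(\beta,\gamma_j)\Big)\log t + O(1)
\]
for \emph{every} simple closed curve $\beta$. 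Dividing the same formula for $\beta'$ gives the ratio $i(\beta,\sum\gamma_i)/i(\beta',\sum\gamma_i)$ directly. Your steps 2--3 (showing $\nu$ is disjoint from, then supported on, $\bigcup\gamma_j$) and the separate treatment of $\delta_j$ versus a witness curve $\alpha_0$ are not wrong, but they recover this formula piecemeal; once you have it for all $\beta$, the identification of the limit is immediate without passing to subsequences.
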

\begin{proof}
 We need to show that for any two simple closed curves
 $\beta,\beta'$,
 $$\frac{\ell_{X_t}(\beta)}{\ell_{X_t}(\beta')}\ra
 \frac{i(\beta,\sum\gamma_i)}{i(\beta',\sum\gamma_i)},$$
 where $X_t=gr_{t\lambda}(X).$

 Extend $\{\g_1,\dots,\g_k\} $ to a pants decomposition
 $\{\g_1,\dots,\g_N\}$. By Corollary \ref{cor1} all the
 $\g_j$ have length bounded above along the grafting
 ray. Therefore,   applying Proposition
 \ref{brokenarc}, we have
 $$
 \ell_{X_t}(\beta)=\sum_{j=1}^Ni(\beta,\g_j)
 [2\log\frac{1}{\ell_{X_t}(\g_j)}+
 Tw_{X_t}(\beta,\g_j)\ell_{X_t}(\g_j) ]+O(1).
 $$

By Proposition \ref{twists}, all the terms
$Tw_{X_t}(\beta,\g_j)\ell_{X_t}(\g_j) $ are bounded above.
 Since $\ell_{X_t}(\g_j)$ is bounded below away from
 zero for $j=k+1,\dots,N$ (Corollary \ref{cor1}),   the
 terms of the previous summation  from $j=k+1$ to $j=N$ are
 bounded above.
 Therefore we get
$$
 \ell_{X_t}(\beta)=2\sum_{j=1}^ki(\beta,\g_j)
 \log\frac{1}{\ell_{X_t}(\g_j)} +O(1),
 $$
and similarly for $\beta'$.
  By Proposition \ref{lengthbound}, we have
that $\ell_{X_t}(\g_i)\sim 1/t$, for all $i=1,\dots ,
k$, so that
$$\frac{\log \frac{1}{\ell_{X_t}(\g_i)}} {\log(t)}\to 1.
$$

Therefore, to compute the limit of  the quotient
$\frac{\ell_{X_t}(\beta_i)} {\ell_{X_t}(\beta_j)}$, we
first divide numerator and denominator by $\log(t)$,
then all the bounded terms vanish  and we get the
desired result.
\end{proof}

\section{Grafting ray is a Quasi-geodesic in Teichm\"uller
space}

In this section we will use Minsky's product region theorem to
estimate the distance between a grafting ray ${\cal
R}_{\lambda}(X)$ based on a surface $X\in \teich$ and  the
Teichm\"uller geodesic ray ${\cal G}(\lambda,X)$  based on the
same surface and with vertical foliation $\lambda$, when
$\lambda=c_1\g_1+\dots+c_k\g_k$.

 We first give some   results needed to analyze the different
 components of Minsky's formula.



\begin{proposition}\label{picero} Consider the
situation described in section 2.5
 to define the map $\Pi_0\colon \teich \to
 {\cal T}(S_{\cal A})$;  let
$X_t\in \teich$  be a family such that the length of any closed
curve $\beta$ disjoint from the curves in $\cal A$ is bounded
above. Then, the family $\Pi_0(X_t)$ is contained in a compact
subset of $ {\cal T}(S_{\cal A})$.
\end{proposition}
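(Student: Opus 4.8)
The plan is to use the well-known fact that a subset of Teichm\"uller space is relatively compact if and only if it is contained in the thick part, i.e. there is a lower bound on the lengths of all simple closed curves (equivalently, on the systole); combined with an upper bound on the lengths of a pants decomposition, this pins $\Pi_0(X_t)$ into a compact region via the broken-arc estimate.

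First I would fix the setup: $\mathcal A=\{\g_1,\dots,\g_k\}$, extended to a pants decomposition $\{\g_1,\dots,\g_k,\g_{k+1},\dots,\g_N\}$ with marking $\mu$, and $S_{\mathcal A}$ the surface obtained by pinching $\mathcal A$, with induced marking $\mu_{\mathcal A}$ and the curves $\bar\g_{k+1},\dots,\bar\g_N$ forming a pants decomposition of $S_{\mathcal A}$. By construction of $\Pi_0$, the Fenchel--Nielsen coordinates of $\Pi_0(X_t)$ with respect to $\mu_{\mathcal A}$ are exactly $(\ell_{X_t}(\g_{k+1}),\dots,\ell_{X_t}(\g_N),t_{\g_{k+1}}(X_t),\dots,t_{\g_N}(X_t))$. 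Moreover lengths and twisting numbers of curves disjoint from $\mathcal A$ are intrinsic to $S_{\mathcal A}$: a curve $\beta$ on $S$ disjoint from $\mathcal A$ descends to a curve $\bar\beta$ on $S_{\mathcal A}$ with $\ell_{\Pi_0(X_t)}(\bar\beta)=\ell_{X_t}(\beta)$ (up to the usual bounded error coming from the $\mp O(1)$ in Minsky's setup, which is harmless), and similarly for twisting numbers around the $\bar\g_j$.

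Next I would establish the two needed bounds on $Y_t:=\Pi_0(X_t)$. The upper bound on the pants curves $\ell_{Y_t}(\bar\g_j)=\ell_{X_t}(\g_j)$, $j=k+1,\dots,N$, is immediate from the hypothesis, since each $\g_j$ is disjoint from $\mathcal A$. For the lower bound on the systole of $Y_t$: suppose some simple closed curve $\bar\delta$ on $S_{\mathcal A}$ had $\ell_{Y_t}(\bar\delta)\to 0$ along a subsequence. Lift $\bar\delta$ to a curve $\delta$ on $S$ disjoint from $\mathcal A$; by hypothesis $\ell_{X_t}(\delta)$, hence $\ell_{Y_t}(\bar\delta)$, is bounded above — not yet a contradiction. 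To get one, pick any curve $\beta$ on $S_{\mathcal A}$ with $i(\beta,\bar\delta)\neq 0$; one can choose $\beta$ to be the image of a curve on $S$ disjoint from $\mathcal A$ (for instance, complete $\{\bar\delta\}$ and take an intersecting curve within a subsurface of $S_{\mathcal A}$ missing the nodes — this lifts to $S$ avoiding $\mathcal A$), so $\ell_{X_t}(\beta)$, hence $\ell_{Y_t}(\beta)$, is also bounded above. But the collar lemma forces $\ell_{Y_t}(\beta)\ell_{Y_t}(\bar\delta)\ge 2\,i(\beta,\bar\delta)\,\mathrm{arcsinh}(1)$ roughly, so $\ell_{Y_t}(\bar\delta)\to 0$ would force $\ell_{Y_t}(\beta)\to\infty$ — contradiction. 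Hence $\mathrm{sys}(Y_t)$ is bounded below, and together with the upper bound on the pants curves, Mumford's compactness criterion gives that $\{Y_t\}$ lies in a compact subset of $\mathcal T(S_{\mathcal A})$.

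The main obstacle is the middle step: verifying that for the curve $\bar\delta$ realizing a short systole one can always produce an intersecting curve $\beta$ that is the image of an $\mathcal A$-disjoint curve on $S$, so that the hypothesis applies to it. One must be a little careful when $\bar\delta$ is itself (homotopic to) one of the $\bar\g_j$ or lives in a small subsurface near a node; in those cases the argument is cleanest by instead invoking Mumford compactness directly through the broken-arc estimate (Proposition \ref{brokenarc}) applied on $S_{\mathcal A}$ with pants system $\{\bar\g_{k+1},\dots,\bar\g_N\}$: the hypothesis bounds $\ell_{Y_t}(\bar\g_j)$ above, and it bounds $\ell_{Y_t}(\bar\beta)$ above for every fixed $\bar\beta$ disjoint from the nodes, which by Proposition \ref{brokenarc} bounds each $\log\frac{1}{\ell_{Y_t}(\bar\g_j)}$ and each $Tw_{Y_t}(\bar\beta,\bar\g_j)\ell_{Y_t}(\bar\g_j)$ above; choosing $\bar\beta$ to be appropriate dual curves then bounds all Fenchel--Nielsen coordinates of $Y_t$ in a compact box (lengths bounded above and below, twists bounded), which is exactly relative compactness in $\mathcal T(S_{\mathcal A})$. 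I would present the proof in this second form, as it reuses machinery already set up in the paper and avoids case analysis.
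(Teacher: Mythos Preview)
Your proposal is correct and follows essentially the same route as the paper. The paper's proof is a terse version of your ``second form'': it shows the Fenchel--Nielsen coordinates $(\ell_{X_t}(\g_j),t_j(X_t))_{j=k+1,\dots,N}$ lie in a compact box by observing that the $\ell_{X_t}(\g_j)$ are bounded above by hypothesis, bounded below by the same collar-type argument you spell out (a curve intersecting $\g_j$ and disjoint from $\mathcal A$ has bounded length, so $\g_j$ cannot shrink), and that the twist parameters are bounded via Corollary~\ref{easytwist}, which is precisely the consequence of Proposition~\ref{brokenarc} you invoke with dual curves. The only cosmetic difference is that the paper works directly on $S$ rather than transferring to $S_{\mathcal A}$, which sidesteps the $O(1)$ length-comparison you mention; you might streamline your write-up the same way.
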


\begin{proof}
Let $(\ell_{X_t}(\g_j), t_j(X_t))_{j=1,\dots,N}$ be
the Fenchel-Nielsen coordinates of $X_t$ with respect
to the marking $\{\g_j;\mu\}$. Because the lengths of
all simple closed curves disjoint from $\g_1,\dots,
\g_k$ are bounded above, then these lengths are also
bounded below away from zero, and the twist parameters
$t_{k+1}(X_t), \dots,t_{N}(X_t)$ are all bounded by
Corollary \ref{easytwist}. Hence, we have that
$(\ell_{X_t}(\g_j), t_j(X_t))_{j=k+1,\dots,N}$ is
contained in a compact subset of $(\R_+\times
\R)^{2N-2k} $, and this last set parameterizes ${\cal
T}(S_{\cal A})$.
\end{proof}

Next, we give some properties of  Teichm\"uller
geodesics, mainly extracted from  \cite{Masur} (see
also \cite{crs}).

\begin{proposition}\label{masur}
Let $\{\g_1,\dots,\g_k\}$ be a system of disjoint
simple closed curves  and
$\lambda=c_1\g_1+\dots+c_k\g_k$, $c_i>0$. Let $X\in
\teich$ and consider $\varphi\in \Sigma^1(X)$ with
vertical foliation projectively equivalent to
$\lambda$. Let ${\cal G}(\lambda,X)$ be the
Teichm\"uller geodesic ray determined by $\varphi$,
and ${\cal G}_t={\cal
G}_t(\lambda,X)=(X,\lambda,\frac{t}{t+2})$. Then we
have:
\begin{itemize}
\item[i)] the length of any simple closed geodesic disjoint from
$\g_i$ ($i=1,\dots, k$) is bounded above. \item[ii)] for all
$i=1,\dots,k$ we have $\ell_{{\cal G}_t} (\g_i)\sim
\frac{1}{t+1}\,$ for $t\to \infty$; \item[iii)] for any $\alpha$
intersecting $\g_i$, $Tw_{{\cal G}_t} (\alpha,\g_i)\ell_{{\cal
G}_t}(\g_i)$ is bounded.

\end{itemize}
\end{proposition}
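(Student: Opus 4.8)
The plan is to analyze the Teichm\"uller geodesic ray $\mathcal{G}(\lambda,X)$ via its quadratic differential $\varphi$, whose vertical foliation is $\lambda = c_1\g_1+\dots+c_k\g_k$. A weighted system of disjoint simple closed curves as vertical foliation forces $\varphi$ to have a very rigid flat structure: near each $\g_i$ there is a flat cylinder $C_i$ (the union of closed horizontal trajectories homotopic to $\g_i$), and off these cylinders the horizontal foliation has only finitely many singular leaves and all remaining leaves are dense. This is exactly the picture in \cite{Masur}. For (i), a simple closed geodesic $\delta$ disjoint from all $\g_i$ has $i(\delta,\lambda)=0$, so $\delta$ is freely homotopic to a curve carried by the complement of the cylinders together with, at worst, going around the $\g_i$ finitely many times with zero net vertical measure; its extremal length stays bounded along the Teichm\"uller deformation because the expansion of the horizontal direction does not increase it — more precisely, the flat length of its geodesic representative is computed from horizontal and vertical $\varphi$-lengths, the vertical part is bounded and the horizontal part only shrinks (or is comparable) under $\mathcal{G}_t$. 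Hence $\mathrm{Ext}_{\mathcal{G}_t}(\delta)$ and therefore $\ell_{\mathcal{G}_t}(\delta)$ is bounded above.

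For (ii), the key point is that on the geodesic $\mathcal{G}_t = (X,\varphi,\tfrac{t}{t+2})$ the modulus of the cylinder $C_i$ grows linearly: writing $w_i$ for the $\varphi$-width (= vertical measure $= c_i\,i(\g_i,\text{transversal})$, which is just $c_i$ up to normalization) and $h_i$ for the circumference, the Teichm\"uller map scales the circumference by $K^{-1/2}$ and the height by $K^{1/2}$ with $K = \tfrac{1+k}{1-k}$, $k=\tfrac{t}{t+2}$, so $K \sim t$ and $\mathrm{mod}(C_i) \sim t$. Since the hyperbolic length of the core of an embedded cylinder of modulus $m$ is $\sim 1/m$, and $C_i$ is a definite-sized collar for $\g_i$, this gives $\ell_{\mathcal{G}_t}(\g_i) \sim 1/t$; the normalization $\tfrac{1}{t+1}$ absorbs the behavior for small $t$. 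One must check the constants are uniform, i.e. that the flat cylinder always contains (and is contained in) a collar of comparable modulus — this is standard and again in \cite{Masur}.

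For (iii), the twisting estimate: pick a specific curve $\alpha_0$ crossing $\g_i$ once (or twice), whose geodesic representative on $X$ is fixed; its image under the Teichm\"uller map crosses the cylinder $C_i$, and the twisting $Tw_{\mathcal{G}_t}(\alpha_0,\g_i)$ is, up to bounded error, the ratio of the horizontal displacement picked up crossing $C_i$ to the circumference. The cylinder $C_i$ is inserted with \emph{zero twist} relative to the $\varphi$-coordinates (the flat structure has no shear there), so the twisting of $\alpha_0$ around $\g_i$ stays bounded \emph{in $\varphi$-coordinates}; translating to the hyperbolic metric via Minsky's comparison (the Remark in \S\ref{twisting}) shows $Tw_{\mathcal{G}_t}(\alpha_0,\g_i)\ell_{\mathcal{G}_t}(\g_i)$ is bounded, and then the difference $|Tw_{\mathcal{G}_t}(\alpha,\g_i)-Tw_{\mathcal{G}_t}(\alpha_0,\g_i)|$ is independent of $t$ up to bounded error for any other $\alpha$ crossing $\g_i$, since $\ell_{\mathcal{G}_t}(\g_i)\to 0$. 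I expect the main obstacle to be (iii): making precise that the Teichm\"uller deformation introduces no unbounded shear across the collar — one has to argue carefully with the flat structure near the boundary of $C_i$ and the singular leaves abutting it, and control how the hyperbolic geodesic representative of $\alpha$ threads through the collar versus the flat geodesic. Items (i) and (ii) are comparatively routine given the structure theory of quadratic differentials with closed vertical trajectories.
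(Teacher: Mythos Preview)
Your treatment of (ii) is essentially the paper's: both identify $\varphi$ as a Jenkins--Strebel differential whose cylinders $A_i$ have moduli scaling to $(t+1)M_i$ (with the chosen parametrization one has $K=t+1$ exactly), and then invoke Masur's comparison of the hyperbolic metric on $\mathcal G_t$ with the hyperbolic metric of $A_{i,t}$ to conclude $\ell_{\mathcal G_t}(\g_i)\sim 1/(t+1)$.

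For (i) your sketch has a direction-of-inequality problem: plugging the flat metric into the definition of extremal length yields a \emph{lower} bound $\mathrm{Ext}_{\mathcal G_t}(\delta)\ge \ell_{\varphi_t}(\delta)^2$, not an upper bound, so a bounded (in fact shrinking) flat length does not by itself cap $\mathrm{Ext}$ or $\ell_{\mathrm{hyp}}$. (Also, the horizontal direction is \emph{stretched} by $K^{1/2}$; what saves $\delta$ is that its $\varphi$-geodesic representative lies on the vertical critical graph and so has zero horizontal variation.) The paper avoids this by simply citing Masur's Lemma~3(v): the hyperbolic metrics along the ray converge to a fixed metric on the noded surface $S_{\mathcal A}$, so curves disjoint from the $\g_i$ have convergent, hence bounded, length. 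For (iii) your geometric observation is correct --- the Teichm\"uller map is diagonal in $\varphi$-coordinates, so the ratio (displacement along $\g_i$)/(circumference) for a fixed transversal is constant in $t$ --- but the step ``translating to the hyperbolic metric via Minsky's comparison (the Remark in \S\ref{twisting})'' does not go through: that remark compares two \emph{hyperbolic} quantities (Fenchel--Nielsen twist versus twisting number), not a flat twist with a hyperbolic one. Bridging flat and hyperbolic twists needs extra input (essentially the Rafi--type thin-part comparison in \cite{crs}). The paper instead takes a self-contained route: it observes that the proof of Proposition~\ref{twists} transfers verbatim, i.e.\ one bounds $\ell_{\mathcal G_t}(\delta_j)$ below by the broken-arc estimate and above by an explicit curve crossing the flat cylinder (whose hyperbolic length across $A_{i,t}$ is $\le 2\log(\mathrm{mod}\,A_{i,t})+O(1)\sim 2\log t$); the $\log t$ terms cancel and what remains is the bound on $Tw_{\mathcal G_t}(\delta_j,\g_i)\,\ell_{\mathcal G_t}(\g_i)$.
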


\begin{proof}
i) By  \cite{Masur} Lemma 3 (v), the hyperbolic metrics  on the
points of the Teichm\"uller geodesic ray converge to a  specific
hyperbolic metric on the punctured surface $S_{\cal A}$.
Therefore, any curve disjoint from $\g_i$ converges to a curve in
the limiting surface, and hence  its length is bounded above.

 ii)
A quadratic differential like the one in the
statement, with all the leaves of the vertical
foliation being closed curves, is called a {\it
Jenkins-Strebel differential}. The leaves of this
foliation are grouped into conformal annuli $A_i$
($i=1,\dots,k$) with core curve homotopic to $\g_i$.
Let $M_i$ be the moduli of these annuli (which depends
on the coefficient $c_i$ and the hyperbolic length of
$\g_i$) on $X$. The surface ${\cal
G}_t=(X,\lambda,\frac{t}{t+2})$ is also decomposed
into conformal annuli $A_{i,t}$, whose moduli are now
$(t+1)M_i$. The hyperbolic length of the core geodesic
of $A_{i,t}$ is $\frac{\pi}{(t+1)M_i} $. Now, the
result follows from Lemma 4 in \cite{Masur}, where the
author proves that the hyperbolic metric on ${\cal
G}_t$ and the hyperbolic metric on $A_{i,t}$ have the
same order (as $t\to \infty$) at all  the points of a
certain subannulus $A_{\delta}$ of $A_{i,t}$, which
contains the geodesic core of $A_{i,t}$.

iii) The proof of Proposition \ref{twists} works also in the
situation explained in \cite{Masur} for Teichm\"uller geodesic
rays.

%
\end{proof}

We finally prove the theorem.

\begin{theorem}\label{quasigeodesic} Let
$\{\gamma_1,\cdots,\gamma_k\}$ be a system of disjoint
simple closed  curves on $S$, let
  $\lambda=\sum c_i\gamma_i$ be
a weighted system, and consider any $X\in\teich$.
Then, the grafting ray
 ${\cal R}_{\lambda}(X)$ is at
bounded distance from the Teichm\"uller geodesic ray
${\cal G} (\lambda, X)$.
\end{theorem}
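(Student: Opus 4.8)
The plan is to apply Minsky's product region theorem with $\mathcal{A}=\{\g_1,\dots,\g_k\}$, after extending to a pants decomposition $\{\g_1,\dots,\g_N\}$ with a fixed marking $\mu$ and dual curves $\delta_j$. Both rays $\mathcal{R}_\lambda(X)$ and $\mathcal{G}(\lambda,X)$ eventually enter the thin part $\mathcal{T}_{thin}(\mathcal{A},\epsilon_0)$: for the grafting ray this is Proposition \ref{lengthbound} (each $\ell_{X_t}(\g_i)\sim 1/t\to 0$), and for the Teichm\"uller ray this is Proposition \ref{masur}(ii) ($\ell_{\mathcal{G}_t}(\g_i)\sim 1/(t+1)\to 0$). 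Fixing a suitable matching of parameters (say compare $\mathcal{R}_t$ with $\mathcal{G}_t$, or with $\mathcal{G}_{s(t)}$ for a reparametrization determined below), it suffices by the product region theorem to bound, uniformly in $t$:
\begin{itemize}
\item[(a)] $d_{\mathcal{T}(S_{\mathcal{A}})}(\Pi_0(\mathcal{R}_t),\Pi_0(\mathcal{G}_t))$, and
\item[(b)] $d_{H_{\g_i}}(\Pi_{\g_i}(\mathcal{R}_t),\Pi_{\g_i}(\mathcal{G}_t))$ for each $i=1,\dots,k$.
\end{itemize}

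For (a): by Corollary \ref{cor1}(a) (grafting side) and Proposition \ref{masur}(i) (Teichm\"uller side), every simple closed curve disjoint from $\mathcal{A}$ has length bounded above along each ray; hence by Proposition \ref{picero}, both families $\Pi_0(\mathcal{R}_t)$ and $\Pi_0(\mathcal{G}_t)$ lie in a fixed compact subset of $\mathcal{T}(S_{\mathcal{A}})$, so their distance is $O(1)$.

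For (b): the key inputs are the twist bounds. Proposition \ref{twists} gives $Tw_{\mathcal{R}_t}(\delta_i,\g_i)\,\ell_{\mathcal{R}_t}(\g_i)=O(1)$, and Proposition \ref{masur}(iii) gives $Tw_{\mathcal{G}_t}(\delta_i,\g_i)\,\ell_{\mathcal{G}_t}(\g_i)=O(1)$; via the Remark in Section \ref{twisting} these transfer to the twist parameter $t_{\g_i}$ (for the marking curve $\mu$), so each ray has $t_{\g_i}(\cdot)\ell_{\cdot}(\g_i)=O(1)$. Then formula (\ref{Teichmullerdistance}) applies and gives
$$
d_{H_{\g_i}}(\Pi_{\g_i}(\mathcal{R}_t),\Pi_{\g_i}(\mathcal{G}_t))
=\Big|\log\frac{\ell_{\mathcal{R}_t}(\g_i)}{\ell_{\mathcal{G}_t}(\g_i)}\Big|\pm O(1).
$$
Since $\ell_{\mathcal{R}_t}(\g_i)\sim 1/t$ and $\ell_{\mathcal{G}_t}(\g_i)\sim 1/t$ (Propositions \ref{lengthbound}, \ref{masur}(ii)), the ratio is bounded above and below by constants, so the logarithm is $O(1)$. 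This handles every factor, and the product region theorem then yields $d_{\mathcal{T}(S)}(\mathcal{R}_t,\mathcal{G}_t)=O(1)$ uniformly for $t$ large; the finitely many small-$t$ values (before entering the thin part) contribute only a bounded amount, giving the result.

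The main obstacle is bookkeeping the reparametrization and the use of the product region theorem correctly: the theorem compares two points that both lie in $\mathcal{T}_{thin}(\mathcal{A},\epsilon_0)$, so one must either verify that $\mathcal{R}_t$ and $\mathcal{G}_t$ enter the thin part at comparable times, or argue that matching $\mathcal{R}_t$ with the nearest point of $\mathcal{G}$ (which exists by the length estimates forcing $\ell(\g_i)\to 0$ at comparable rates) is enough. In fact the cleanest route is: given $t$ large, choose $s=s(t)$ with $\ell_{\mathcal{G}_s}(\g_1)=\ell_{\mathcal{R}_t}(\g_1)$ (possible since $\ell_{\mathcal{G}_s}(\g_1)$ is continuous, decreasing to $0$); then (b) for $i=1$ is automatically $O(1)$, and since all $\ell_{\mathcal{G}_s}(\g_i)$ and all $\ell_{\mathcal{R}_t}(\g_i)$ are comparable to $1/s$ resp.\ $1/t$, and the ``$\sim$'' constants are fixed, $s$ and $t$ are comparable, so (b) for the remaining $i$ and (a) still hold. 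Verifying that the implied constants in Propositions \ref{lengthbound} and \ref{masur} are genuinely uniform in $t$ (they are, being stated that way) is what makes this go through. Everything else is an assembly of results already proved.
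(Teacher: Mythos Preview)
Your proposal is correct and follows essentially the same approach as the paper: extend to a pants decomposition, use Corollary~\ref{cor1}/Proposition~\ref{masur}(i) with Proposition~\ref{picero} to control the $\Pi_0$ factor, and use the length estimates $\ell\sim 1/t$ together with the twist bounds (Propositions~\ref{twists} and~\ref{masur}(iii)) and formula~(\ref{Teichmullerdistance}) to control each $H_{\g_i}$ factor, then conclude via the product region theorem. Your additional care about both rays entering $\mathcal{T}_{thin}(\mathcal{A},\epsilon_0)$ and the optional reparametrization is more explicit than the paper's own treatment, but the argument is the same.
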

\begin{proof}
We will show that $d_{\teich}({\cal R}_t,{\cal G}_{t} )$ is
bounded above. As in section 2.5, extend the $\g_i$ to a pants
decomposition and take a marking. By Corollary \ref{cor1} and
Proposition \ref{masur}(i), the length of curves disjoint from
$\g_1,\dots,\g_k$ have bounded length both along the grafting and
the Teichm\"uller rays. Then, by Proposition \ref{picero}, $\{
\Pi_0({\cal R}_t),$ $\Pi_0({\cal G}_{t}) : t\geq 0 \}$ is
contained in a compact set of ${\cal T}(S_{\cal A})$, and
therefore \break $d_{{\cal T}(S_{\cal A})}(\Pi_0({\cal
R}_t),\Pi_0({\cal G}_{t} ))$ is bounded as $t\to \infty$.


On the other hand, by Propositions \ref{lengthbound}
and \ref{twists}, we have
$$\ell_{\cal R_t}(\gamma_i)\sim 1/t,$$
$$Tw_{\cal R_t}(\delta_{\gamma_i},\gamma_i)
\ell_{\cal R_t}(\gamma_i)=O(1);$$
 and  by Proposition
\ref{masur},
$$
\ell_{\cal G_t}(\gamma_i)\sim \frac{1}{t+1},$$
$$Tw_{\cal G_t}(\delta_{\gamma_i},\gamma_i)\ell_{\cal
G_t}(\gamma_i)=O(1).$$
 Then, by equation
(\ref{Teichmullerdistance}), we have
$$d_{H_{\gamma_i}}(\Pi_{\gamma_i}(\cal R_t),
\Pi_{\gamma_i}(\cal G_{t}) )=|\log \frac{\ell_{\cal G_{t}}
(\gamma_i)}{\ell_{\cal R_t}(\gamma_i)}|\pm O(1).$$

Putting all these estimates into Minsky's product region theorem,
we finally obtain that the Teichm\"uller distance between $\cal
G_{t}$ and $\cal R_t$ is bounded.
\end{proof}

 {\bf ACKNOWLEDGEMENTS} The
authors greatly benefited from the conversations with
C. McMullen.

\bigskip
\vskip .05 in \noindent
Departamento de Geometr\'{\i}a y Topolog\'{\i}a\\
 Fac. CC. Matem\'aticas\\
  Universidad Complutense de Madrid, 28040 Madrid, Espa\~na\\e-mail:
 radiaz\char`\@
 mat.ucm.es
\vskip .09 in \noindent
Department of Mathematics\\
 Seoul National University, Seoul 151-742, Korea\\e-mail:
 inkang\char`\@
 math.snu.ac.kr


\begin{thebibliography}{99}
\bibitem{cr} Y. Choi and K. Rafi {\it Comparison between Teichm\"uller and
Lipschitz metrics},  preprint.
\bibitem{crs} Y. Choi, K. Rafi and C. Series, {\it Lines of minima and
Teichm\"uller geodesics}, preprint.
\bibitem{DS} R. D\'{\i}az and C. Series, {\it Limit points of lines of
minima in Thurston's boundary of Teichm\"uller space}, Alg. Geom.
Top. 3 (2003), 207-234.
\bibitem{Du}D. Dumas, {\it Grafting, pruning, and the antipodal map on
measured laminations}, J. Diff. Geom. 74 (2006), 93-118.
\bibitem{dw}D. Dumas and M. Wolf, Projective structures, Grafting,
and Measured laminations, preprint.
\bibitem{Ga} F. Gardiner, {\it Teichm\"uller theory and quadratic
differentials}, John Wiley \& Sons (1987).
\bibitem{GM} F. Gardiner and H. Masur, {\it Extremal length geometry of
Teichm\"uller space}, Complex Variables 16 (1991), 209-237.
\bibitem{HM} J. Hubbard and H. Masur, {\it Quadratic differentials and foliations},
 Acta Math. 142 (1979), 221-274.
\bibitem{KT}Y. Kamishima and S. Tan, Deformation spaces on
geometric structures, In {\it Aspects of low-dimensional
manifolds}, vol 20 of {\it Adv. Stud. Pure Math}., 263-299.
Kinokuniya, Tokyo, 1992.
\bibitem{Ka}M. Kapovich, {\it Hyperbolic manifolds and discrete
groups}, vol 183 of {\it Progress in Mathematics}. Birkh\"auser
Boston Inc., Boston, MA, 2001.
\bibitem{K} S. Kerckhoff, {\it The asymptotic geometry of Teichm\"uller
space}, Topology 19 (1980), 23-41.
\bibitem{K1} S. Kerckhoff, {\it Lines of minima in Teichm\"uller space},
Duke Math. J. 65 (1992), 187-213.
\bibitem{L} A. Lenzhen, {\it Teichm\"uller geodesics which do not
converge}, preprint.
\bibitem{Masur2} H. Masur, {\it On a class of geodesics in Teichm\"uller
space}, Ann. of Math. 102, No. 2 (1975), 205-221.
\bibitem{Masur} H. Masur, {\it Two boundaries of Teichm\"uller space},
Duke Math. J. 49 (1982), 183-190.
\bibitem{mcm} C. T. McMullen, \emph{Complex earthquakes and
Teichm\"{u}ller spaces}, J. Amer. Math. Soc., 11 (2) (1998),
283-320.
\bibitem{Min1} Y. Minsky, {\it Harmonic maps, length, and energy in
Teichm\"uller space}, J. Diff. Geom. 35 (1992), 151-217.
\bibitem{Min} Y.N. Minsky, {\it Extremal length estimates and product
regions in Teichm\"uller space}, Duke Math. J. Vol. 83,No. 2,
(1996), 249-286.
\bibitem{St} K. Strebel, {\it Quadratic differentials}, Springer Verlag
(1980).
\bibitem{Wolf} K. Scannell and M. Wolf, {\it The grafting map of
Teichm\"uller space}, J. AMS, 15 (2002) (4), 893-927.
\bibitem{Ta} H. Tanigawa, {\it Grafting, harmonic maps and
Projective structures on surfaces}, J. Diff. Geom. 47 (1997),
399-419.
\bibitem{Wolpert}S. Wolpert, {\it The length spectra as moduli for
 compact Riemann surfaces}, Ann. of Math. 109, No. 2   (1979),
323-351.
\end{thebibliography}
 \end{document}